\documentclass[preprint]{elsarticle}

\usepackage{graphicx}
\usepackage{graphics}
\usepackage{epsfig}

\usepackage{amsmath,amssymb,amscd,amsthm}

\usepackage{caption}
\usepackage{subcaption}

\usepackage{xcolor}
\usepackage{soul}

\usepackage{tikz}

\usepackage{tcolorbox}
\usepackage{multicol}
\usepackage{parcolumns}
\newtheorem{theorem}{Theorem}[section]
\newtheorem{definition}[theorem]{Definition}
\newtheorem{lemma}[theorem]{Lemma}

\newtheorem{question}[theorem]{Question}

\begin{document} 

\title{Presentations and Representations of the Multi-Virtual Twin Group and Associated Subgroups}

\author[1]{Vaibhav Keshari\corref{cor1}}
\ead{vaibhav.23maz0022@iitrpr.ac.in}

\author[2]{Taher I. Mayassi}
\ead{taher.mayasi@liu.edu.lb}

\author[1]{Madeti Prabhakar}
\ead{prabhakar@iitrpr.ac.in}

\author[3]{Mohamad N. Nasser}
\ead{m.nasser@bau.edu.lb}

\cortext[cor1]{Corresponding author}

\affiliation[1]{
organization={Department of Mathematics, Indian Institute of Technology Ropar},
city={Rupnagar},
state={Punjab},
postcode={140001},
country={India}
}

\affiliation[2]{
organization={Department of Mathematics and Physics, Lebanese International University},
city={Beirut},
country={Lebanon}
}

\affiliation[3]{
organization={Department of Mathematics and Computer Science, Beirut Arab University},
city={Beirut},
country={Lebanon}
}




\begin{abstract}
Motivated by the notion of the multi-virtual braid group introduced by L. Kauffman, and by the study of extensions of the well-known twin group $T_n, n\geq 2$, we introduce a new group called the multi-virtual twin group $M_kVT_n$, where $k \geq 1$ and $n \geq 2$, together with two associated subgroups: the multi-virtual pure twin group $M_kVPT_n$ and the multi-virtual semi-pure twin group $M_kVHT_n$. We then classify all homogeneous $2$-local representations of $M_kVT_n$ into $\mathrm{GL}_n(\mathbb{C})$ for all $k\geq 1$ and $n\geq 3$, and show that they fall into exactly eight distinct types. We also investigate their main properties, including faithfulness and irreducibility, proving that they are generally unfaithful and providing necessary and sufficient conditions for their irreducibility. Furthermore, for certain values of $k$ and $n$, we construct non-local representations of $M_kVPT_n$ induced from those of $M_kVT_n$, and we determine the conditions under which these induced representations are irreducible. Finally, we present several problems for future research in this area.
\end{abstract}
\begin{keyword}
     Twin groups \sep Virtual twin groups \sep Multi-virtual twin groups \sep Multi-virtual pure twin groups \sep Multi-virtual semi-pure twin groups \sep Group representations \sep Irreducibility\\
    2008 Mathematics Subject Classification: 20E07 \sep 20F36
\end{keyword}
\maketitle

\section{Introduction}
The twin group $T_n$, for $n \geq 2$, is a well-known class of right-angled Coxeter groups generated by $n-1$ involutions denoted as $s_1, s_2, \dots, s_{n-1}$ satisfying the far commutation relations: $s_is_j = s_js_i$ for $|i-j| \geq 2$. This generating set is referred to as the twin generators. This group was first introduced by Shabat and Voevodsky~\cite{SV}, and later studied in detail by Khovanov~\cite{M.Khovanov1996, MK}, who provided a geometric interpretation analogous to that of the classical braid group $B_n$. Twin groups, together with their subgroups and extensions, naturally arise in the study of planar configurations of arcs and play an important role in the theory of doodles on surfaces. For more information on the twin group, see \cite{Merkov1999,BSV,Mosto2020,TNM11}.\vspace{0.1cm}

The twin group $T_n$ admits several important subgroups and extensions that have been widely studied in the literature. An important subgroup of $T_n$, introduced in~\cite{BSV}, is the pure twin group $PT_n$, defined as the kernel of the natural homomorphism from $T_n$ onto the symmetric group $S_n$. This group can be viewed as a planar analogue of the pure braid group $P_n$. Among the various extensions of $T_n$, the virtual twin group $VT_n$ has been extensively studied \cite{BSV}. It is defined in analogy with the virtual braid group $VB_n$ and is generated by the twin generators $s_1, s_2, \dots, s_{n-1}$ inherited from $T_n$, together with an additional family of generators $\rho_1, \rho_2, \dots, \rho_{n-1}$ representing virtual interactions. Its pure subgroup, the pure virtual twin group $PVT_n$, is defined as the kernel of the natural projection from $VT_n$ onto $S_n$. Another important extension of $T_n$ is the singular twin group $ST_n$, introduced by Nasser and Chbili~\cite{NasserNafaa}, obtained by adjoining a new family of generators $\tau_1, \tau_2, \ldots, \tau_{n-1}$ to the twin generators of $T_n$, with defining relations analogous to those of the singular braid group $SB_n$. Subsequently, Caprau and Nasser introduced a further generalization of both $T_n$ and $ST_n$ by adding a third family of generators $\nu_1, \nu_2, \ldots, \nu_{n-1}$, which leads to the virtual singular twin group $VST_n$~\cite{Caprau}. \vspace{0.1cm}

In~\cite{LHK2025}, Kauffman introduced the multi-virtual braid group as a generalization of the virtual braid group by allowing multiple families of virtual generators. This framework is naturally connected to multi-virtual knot theory, where several types of virtual crossings are considered. In this context, each family of virtual generators corresponds to a different type of virtual interaction, leading to a more intricate algebraic and topological structure. Motivated by Kauffman’s construction, together with the various extensions of the twin group developed by several authors mentioned in the previous paragraph, we are led to define in our work a multi-virtual version of the twin group, which will be an extension of the twin group $T_n$ and a generalization of the virtual twin group $VT_n$.\vspace{0.1cm}

Representation theory provides a powerful framework for studying algebraic structures by realizing group elements as linear transformations or automorphisms, which helps reveal their underlying structural features. In this context, $h$-local matrix representations~\cite{Nas1} are particularly effective, since each generator acts non-trivially only on a small part of the matrix while acting as the identity elsewhere. This local behavior makes them especially suitable for the study of braid-related and Coxeter-type groups. The theory of $h$-local representations for braid groups and their multi-virtual extensions is well established~\cite{Mik, chrr, Vk2025}, as well as for the twin groups~\cite{Mayasi20251, Nasser202622}. In particular, properties such as faithfulness and irreducibility remain central, as they capture key aspects of the underlying algebraic structure. \vspace{0.1cm}

Motivated by these developments in the literature, we introduce in Section 3 a new family of groups called the multi-virtual twin groups and denoted by $M_kVT_n$, where $k\geq 1$ and $n\geq 2$, together with their natural subgroups, namely the multi-virtual pure twin group $M_kVPT_n$ and the multi-virtual semi-pure twin group $M_kVHT_n$. These groups extend the framework of virtual twin groups by allowing multiple layers of virtual interactions, thereby yielding a more flexible algebraic setting. In Section 4, we classify all homogeneous $2$-local representation of $M_kVT_n$ into $\mathrm{GL}_n(\mathbb{C})$ for all $k\geq 1$ and $n\geq 3$. We also study their main properties, including faithfulness and irreducibility. In Section 5, we focus on representations of $M_kVPT_n$ for particular values of $k$ and $n$, using the representations developed in Section 4, and establish conditions under which these representations are irreducible. Finally, in Section 6, we present several questions for further investigation.

\section{Overview on Twin Groups and Their $h$-Local Representations}

\subsection{Twin Groups}
The twin group $T_n$, where $n\geq 2,$ is given by the following presentation:
\[
T_n = \langle s_1,s_2,\dots, s_{n-1} ~|~ s_i^2 =1 \text{ for }  1\leq i \leq n-1, s_is_j = s_js_i \text{ for } |i-j|\geq2 \rangle.
\]
\noindent For small values of \(n\), the twin group \(T_n\) can be described explicitly as follows:
\begin{itemize}
\item \(T_2=\langle s_1 \mid s_1^2=1 \rangle \cong \mathbb{Z}_2\), which is the cyclic group of order \(2\).
\item \(T_3=\langle s_1,s_2 \mid s_1^2=s_2^2=1 \rangle \cong \mathbb{Z}_2 * \mathbb{Z}_2\), namely the infinite dihedral group.
\end{itemize}
Each generator $s_i$ can be presented diagrammatically using the configuration illustrated in Fig.~\ref{fig:generator} $(a)$.

\begin{figure}
    \centering
     \begin{tikzpicture}[scale=1]
\draw[thick] (-2.7,-1) -- (-2.7,1);
\draw[thick,dashed] (-2.6,0)--(-2.1,0);
\draw[thick] (-2,-1) -- (-2,1);
  \draw[thick] (-1.5,-1) -- (0.5,1);
  \draw[thick] (-1.5,1) -- (0.5,-1);
  \draw[thick] (1,-1) -- (1,1);
\draw[thick,dashed] (1.1,0)--(1.6,0);
\draw[thick] (1.7,-1) -- (1.7,1);
  \draw[thick] (3.5,-1) -- (3.5,1);
\draw[thick,dashed] (3.6,0)--(4.1,0);
\draw[thick] (4.2,-1) -- (4.2,1);
  \draw[thick] (4.7,-1) -- (6.7,1);
  \draw[thick] (4.7,1) -- (6.7,-1);
  \foreach \xy in {(5.7,0),(5.7,0)} 
    \draw[thick] \xy circle(0.1);
    \draw[thick] (7.2,-1) -- (7.2,1);
\draw[thick,dashed] (7.3,0)--(7.8,0);
\draw[thick] (7.9,-1) -- (7.9,1);
    \node at (-2.7,1.2) {$1$};
 \node at (-2.1,1.2) {$i-1$};
 \node at (-1.5,1.2) {$i$};
   \node at (0.35,1.2) {$i+1$};  
    \node at (1.15,1.2) {$i+2$};
     \node at (1.75,1.2) {$n$};
       \node at (3.4,1.2) {$1$};
 \node at (4.1,1.2) {$i-1$};
 \node at (4.6,1.2) {$i$};
   \node at (6.5,1.2) {$i+1$};  
    \node at (7.35,1.2) {$i+2$};
     \node at (8,1.2) {$n$};
     \node at (5.8,-1.5) {$(b)$~$\rho_i$};
     \node at (0,-1.5) {$(a)~ s_i$};
\end{tikzpicture}
    \caption{The Generators $s_i$ and $\rho_i$}
    \label{fig:generator}
\end{figure}
The pure twin group $PT_n$, where $n\geq 2,$ is defined as the kernel of the natural surjective homomorphism $ \pi: T_n \longrightarrow S_n$ defined by $ \pi(s_i) = (i\ \ i+1) \text{ for } i=1,2,\dots, n-1.$ In \cite{BSV}, it has found a generating set of $PT_n$ for $n\geq 3$ using the Reidemeister-Schreier
 method. In addition, we can see that the group $T_n$ admits the decomposition 
\[
T_n \cong PT_n \rtimes S_n.
\]

The virtual twin group $VT_n$, where $n\geq 2$, extends the twin group $T_n$ by incorporating an additional family of generators. It can be described by the generators \{$s_1,s_2, \dots,s_{n-1},\rho_1,\rho_2,\dots,\rho_{n-1} $\} and the following relations:
\begin{align}
    s_i^2 &= 1 \text{  for } i =1,2,\dots ,n-1,\\
    s_is_j &= s_js_i \text{ for } |i-j|\geq 2,\\
    \rho_i^2 &= 1 \text{ for } i=1,2,\dots,n-1,\\
    \rho_i \rho_j &= \rho_j \rho_i \text{ for } |i-j|\geq 2,\\
    \rho_i\rho_{i+1}\rho_i &= \rho_{i+1} \rho_i \rho_{i+1} \text{ for } i=1,2,\dots,n-2,\\
    \rho_i s_j &= s_j\rho_i \text{ for } |i-j|\geq 2,\\
    \rho_i \rho_{i+1} s_i &= s_{i+1} \rho_i \rho_{i+1} \text{ for } i=1,2,\dots n-2,
\end{align}
where the generator $\rho_i$ is presented in a diagrammatic way using the configuration illustrated in  Fig.~\ref{fig:generator} $(b)$. On the other hand, geometrically, the virtual twin group can be understood through the following construction. Consider the known Euclidean plane $\mathbb{R}^2$ =$\{(x,y) ~|~ x,y \in \mathbb{R} \}$ with two parallel lines $y=0$ and $y=1$ on it. Let \((1,1), \ldots, (n,1)\) and \((1,0), \ldots, (n,0)\) be $2n$ marked points on these lines. A virtual twin diagram on $n$ strands is a configuration of $n$ arcs in $\mathbb{R} \times [0,1]$ that connect points $(1,1), \dots,(n,1)$ with points $(1,0),\dots,(n,0)$ in some order and satisfying the following conditions:
\begin{itemize}
    \item The projection of each arc on the $y$-coordinate is a homeomorphism onto $[0,1]$, that is, the arcs are monotonic.
   \item The set \( V(D) \) of crossings of a diagram \( D \) consists only of transverse double points, each equipped with the additional structure of being either a real or a virtual crossing, as illustrated in Fig.~2.
\end{itemize}
\begin{figure}
    \centering
    \begin{tikzpicture}[scale =1]
        \draw (0,0) -- (2,2);
        \draw (0,2) --(2,0);
        \draw (4,0) -- (6,2);
        \draw (4,2) -- (6,0);
        \draw (5,1) circle (5pt);
    \end{tikzpicture}
    \caption{Real and Virtual Crossings}
    \label{fig:placeholder}
\end{figure}

In a similar way of the construction of the pure twin group, the pure virtual twin group $PVT_n$ is defined as the kernel of the natural surjective homomorphism $ \pi: VT_n \longrightarrow S_n$ defined by $ \pi(s_i) =\pi(\rho_i) = (i\ \ i+1) \text{ for } i=1,2,\dots, n-1.$ It is known that $PVT_n$ admits a presentation with the generators $\lambda _{i,j}, 1\leq i < j \leq n,$ and the following defining relations: 
\begin{align}
     \lambda_{i,j} \lambda _{k,l} = \lambda_{k,l} \lambda_{i,j}
\end{align}
for all distinct integers $i, j, k,l$.
It was shown in \cite{TNM} that the generators of $PVT_n$ can be expressed in terms of the generators of $VT_n$ as follows:
\begin{align}
\lambda_{i,i+1} & = s_i\rho_i \text{  for  } 1\leq i\leq n-1,\\
\lambda _{i,j} &= \rho_{j-1}\rho_{j-2} \dots \rho_{i+1}\lambda_{i,i+1} \rho_{i+1} \dots \rho_{j-2} \rho_{j-1} \text{ for } 1\leq i<j\leq n, j\neq i+1.
\end{align}
Additionally, we can see that the group $VT_n$ admits the decomposition 
\[
VT_n \cong PVT_n \rtimes S_n.
\]

\subsection{The $h$-Local Representations}
\noindent This subsection introduces \(h\)-local representations of finitely generated groups and recalls known results of $h$-local representations of the braid and twin groups.

\begin{definition} \cite{Nas1}
Let $G$ be a group with generators $g_1,g_2,\ldots,g_{n-1}$. A representation $\theta: G \longrightarrow \mathrm{GL}_{m}(\mathbb{C})$ is said to be $h$-local if it is of the form
$$\theta(g_i) =\left( \begin{array}{c|@{}c|c@{}}
   \begin{matrix}
     I_{i-1} 
   \end{matrix} 
      & \textbf{0} & \textbf{0} \\
      \hline
    \textbf{0} &\hspace{0.2cm} \begin{matrix}
   		M_i
   		\end{matrix}  & \textbf{0}  \\
\hline
\textbf{0} & \textbf{0} & I_{n-i-1}
\end{array} \right) \hspace*{0.2cm} \text{for} \hspace*{0.2cm} 1\leq i\leq n-1,$$ 
where $M_i \in \mathrm{GL}_h(\mathbb{C})$ with $h=m-n+2$ and $I_r$ is the $r\times r$ identity matrix. The representation $\theta$ is called homogeneous if the matrices \(M_i, 1 \leq i \leq n-1\), are equal.
\end{definition}
Observe that the notion of $h$-local representations naturally extends to any group $G$ generated by $r(n-1)$ elements, where the generators are partitioned into $r$ distinct families, each consisting of $n-1$ elements. For clarity, we present the construction in the special case $r = 2$ in the definition below.

\begin{definition}
Let $G$ be a group with two families of generators $g_1,g_2,\ldots,g_{n-1}$ and $g_1',g_2',\ldots,g_{n-1}'$. An $h$-local representation $\theta: G \longrightarrow \mathrm{GL}_{m}(\mathbb{C})$ is a representation of the form
$$\theta(g_i) =\left( \begin{array}{c|@{}c|c@{}}
   \begin{matrix}
     I_{i-1} 
   \end{matrix} 
      & \textbf{0} & \textbf{0} \\
      \hline
    \textbf{0} &\hspace{0.2cm} \begin{matrix}
   		M_i
   		\end{matrix}  & \textbf{0}  \\
\hline
\textbf{0} & \textbf{0} & I_{n-i-1}
\end{array} \right)
\text{ and } \theta(g'_i) =\left( \begin{array}{c|@{}c|c@{}}
   \begin{matrix}
     I_{i-1} 
   \end{matrix} 
      & \textbf{0} & \textbf{0} \\
      \hline
    \textbf{0} &\hspace{0.2cm} \begin{matrix}
   		M_i'
   		\end{matrix}  & \textbf{0}  \\
\hline
\textbf{0} & \textbf{0} & I_{n-i-1}
\end{array} \right) \vspace{0.5cm} $$
for $1\leq i\leq n-1,$ where $M_i,M_i' \in \mathrm{GL}_h(\mathbb{C})$ with $h=m-n+2$ and $I_r$ is the $r\times r$ identity matrix. In this case, $\theta$ is homogeneous if all the matrices \(M_i, 1 \leq i \leq n-1\), are equal and all the matrices \(M'_i, 1 \leq i \leq n-1\), are equal.
\end{definition}

With regard to $h$-local representations of the twin group $T_n$, Mayassi and Nasser classified all complex homogeneous $2$-local representations of $T_n$ for every $n \geq 2$ and provided a complete analysis of their irreducibility \cite{Mayasi20251}. In addition, Nasser determined all complex homogeneous $3$-local representations of the twin group $T_n$, as well as those of the virtual twin group $VT_n$ and the welded twin group $WT_n$, for all $n \geq 4$, and investigated their faithfulness and irreducibility \cite{Nasser202622}. These results naturally suggest extending the study to broader families of twin-type groups. This motivates us to classify and study the $h$-local representations of our targeted group in this paper, the multi-virtual twin group, which will be defined explicitly in the next section.

\section{Multi-Virtual Twin Group}
 This section begins by introducing  the multi-virtual twin group \(M_kVT_n\), a generalization of the virtual twin group with multiple types of virtual crossings. For $k \geq 1$ and $n \geq 2$, the multi-virtual twin group is the group generated by the elements \[s_i \text{ and }\rho_i^{\alpha},\text{ where } i=1,2 \dots ,n-1, \alpha=0,1,\dots, k-1,\] 
with the defining relations given as follows:
\begin{align}
    s_i^2 &=1 \text{ for } i=1,2,\dots,n-1,\\
    s_is_j &= s_js_i \text{ for } |i-j|\geq 2,\\
    (\rho_i^{\alpha})^2 &= 1 \text{ for } i=1,2, \dots ,n-1 \text{ and } \alpha= 0,1,\dots, k-1 ,\\
    \rho_i^{\alpha} \rho_j^{\beta} &= \rho_j^{\beta} \rho_i^{\alpha} \text{ for } |i-j|\geq 2 \text{ and } 0\leq \alpha, \beta\leq k-1 ,\\
    \rho_i^{\alpha}s_j &= s_j  \rho_i^{\alpha} \text{ for } |i-j|\geq2 \text{ and }  \alpha= 0,1,\dots,k-1,\\
    \rho_i^{\alpha} \rho_{i+1}^{\alpha} \rho_i^{\alpha} &= \rho_{i+1}^{\alpha} \rho_i^{\alpha} \rho_{i+1}^{\alpha} \text{ for } i=1,2,\dots,n-2 \text{ and } \alpha = 0,1,\dots,k-1 ,\\
    \rho_i^{\alpha} \rho_{i+1}^{\beta} \rho_i^{\beta} &= \rho_{i+1}^{\beta} \rho_i^{\beta} \rho_{i+1}^{\alpha} \text{ for } i=1,2,\dots,n-2 \text{ and } 0 \leq \alpha < \beta \leq k-1,   \\
    \rho_i^{\alpha} \rho_{i+1}^{\alpha} \rho_i^{\beta} &= \rho_{i+1}^{\beta} \rho_i^{\alpha} \rho_{i+1}^{\alpha} \text{ for } i=1,2,\dots,n-2 \text{ and } 0 \leq \alpha < \beta \leq k-1, \\
     \rho_i^{\alpha} \rho_{i+1}^{\alpha} s_i &= s_{i+1} \rho_i^{\alpha} \rho_{i+1}^{\alpha} \text{ for } i=1,2,\dots,n-2 \text{ and } \alpha= 0,1,\dots,k-1.
    \end{align}
The diagrammatic interpretations of $s_i$ and $\rho_i^{\alpha}$ are depicted in Fig.~\ref{fig:sipi}. 
    \begin{figure}
        \centering
       \begin{tikzpicture}[scale=1]
\begin{scope}
\draw[thick] (-2.7,-1) -- (-2.7,1);
\draw[thick,dashed] (-2.6,0)--(-2.1,0);
\draw[thick] (-2,-1) -- (-2,1);
  \draw[thick] (-1.5,-1) -- (0.5,1);
  \draw[thick] (-1.5,1) -- (0.5,-1);
  \draw[thick] (1,-1) -- (1,1);
\draw[thick,dashed] (1.1,0)--(1.6,0);
\draw[thick] (1.7,-1) -- (1.7,1);
  \draw[thick] (2.5,-1) -- (2.5,1);
\draw[thick,dashed] (2.6,0)--(3.1,0);
\draw[thick] (3.2,-1) -- (3.2,1);
  \draw[thick] (3.7,-1) -- (5.7,1);
  \draw[thick] (3.7,1) -- (5.7,-1);
  \foreach \xy in {(4.7,0),(4.7,0)} 
    \draw[thick] \xy circle(0.1);
    \draw[thick] (6.2,-1) -- (6.2,1);
\draw[thick,dashed] (6.3,0)--(6.8,0);
\draw[thick] (6.9,-1) -- (6.9,1);
    \node at (-2.7,1.2) {$1$};
 \node at (-2.1,1.2) {$i-1$};
 \node at (-1.5,1.2) {$i$};
   \node at (0.35,1.2) {$i+1$};  
    \node at (1.15,1.2) {$i+2$};
     \node at (1.75,1.2) {$n$};
       \node at (2.4,1.2) {$1$};
 \node at (3.1,1.2) {$i-1$};
 \node at (3.6,1.2) {$i$};
   \node at (5.5,1.2) {$i+1$};  
    \node at (6.35,1.2) {$i+2$};
     \node at (7,1.2) {$n$};
    \node at (5,0) {$\alpha$};
        
\end{scope}
\end{tikzpicture}
        \caption{The Generators $s_i$ and $\rho_i^{\alpha}$}
        \label{fig:sipi}
    \end{figure}
    
\noindent To distinguish between the different types of virtual crossings, we present in Fig.~\ref{fig:example} an example of the element $s_1 \rho_2^{\alpha} \rho_3^{\beta} s_1$ in $M_2VT_4$.
\begin{figure}
    \centering
    \begin{tikzpicture} [scale=0.9]
        \draw[thick] (0,0) -- (5,0);
        \draw[thick] (0,3) -- (5,3);
        \draw[thick] (1,3) to[in=90, out=270] (4,0); 
        \draw[thick] (2,3) .. controls (2,2) and (1,1.5) .. (2,0);
        \draw[thick] (3,3) .. controls (3.5,1.5) and (1.5,0.5) .. (1,0);
        \draw[thick] (4,3) to[in=90 , out=290 ] (3,0); 
        \draw[thick] (2.63,1.43) circle (5pt);
        \draw[thick] (3.35,0.85) -- (3.55,1.05) -- (3.35,1.25) -- (3.15,1.05) -- (3.35,0.85);
        \node[thick] at (2.5,1) {$\alpha$};
        \node[thick] at (3.4,0.6) {$\beta$};
    \end{tikzpicture}
    \caption{The Multi-Virtual Twin $s_1 \rho_2^{\alpha} \rho_3^{\beta} s_1$}
    \label{fig:example}
\end{figure}

\subsection{Multi-Virtual Pure Twin Group}
In this subsection, we define the multi-virtual pure twin group $M_kVPT_n$, in analogy with the pure virtual twin group. For $k \geq 1$ and $n \geq 2$, define the map $$\Phi_{n,k}: M_kVT_n \longrightarrow S_n $$ given by $$ \Phi_{n,k}(s_i)=\Phi_{n,k}(\rho_i^{\alpha})=(i\ \ i+1)$$ for $i=1,2,\dots,n-1$ and $\alpha = 0,1,\dots,k-1.$ We introduce the definition of the multi-virtual pure twin group to be the kernel of the map $\Phi_{n,k}$. We have that $M_kVPT_n$ is a normal subgroup of $M_kVT_n$, and we can obtain the following short exact sequence
\[
1 \longrightarrow M_kVPT_n \longrightarrow M_kVT_n \longrightarrow S_n \longrightarrow 1.
\] Now, define the map $\theta:S_n \longrightarrow M_kVT_n$ on the generators of $S_n$ by \[ \theta((i \ \ i+1)) = \rho_i^{0}(=\rho_i) \ \ \text{for } \  i=1,2,\dots, n-1. \] Then $\theta$ is a splitting homomorphism of  $\Phi_{n,k}$, and hence we get that $$M_kVT_n \cong M_kVPT_n \rtimes S_n.$$
To determine a generating set and defining relations for $M_kVPT_n$, we apply the Reidemeister Schreier method. As a Schreier set of coset representatives for $M_kVPT_n$ in $M_kVT_n$, we choose the same set $\bigwedge_n$ that was used in the computation of generators and relations for the virtual twin group $VT_n$ in~\cite{TNM}. 
\[ \bigwedge_n =\left\{ \prod_{k=2}^{n} m_{k,j_k}\  |\  1\leq j_k \leq k \right\}, \] where $m_{k,l} = \rho_{k-1}\rho_{k-2}\dots \rho_l \text{ for } l < k $ and $m_{k,l}=1$ for other cases.
We define the following elements, expressed in terms of the generators of $M_kVT_n$, by the formulas:
\begin{align*}
    \lambda_{i,i+1}^0 &= \rho_is_i,\\
    \lambda_{i+1,i}^0 &=\rho_i \lambda_{i,i+1}^0 \rho_i = s_i\rho_i,\\
    \lambda_{i,i+1}^{\beta} &= \rho_i\rho_i^{\beta}
\end{align*}
for $i=1,2,\dots,n-1$ and $\beta=1,2,\ldots, k-1$, and 
\begin{align*}
    \lambda_{i,j}^0 &= \rho_{j-1} \rho_{j-2} \dots \rho_{i+1} \lambda_{i,i+1}^0 \rho_{i+1}\dots \rho_{j-2} \rho_{j-1},\\
     \lambda_{j,i}^0 &= \rho_{j-1} \rho_{j-2} \dots \rho_{i+1} \lambda_{i+1,i}^0 \rho_{i+1}\dots \rho_{j-2} \rho_{j-1},\\
      \lambda_{i,j}^{\beta} &= \rho_{j-1} \rho_{j-2} \dots \rho_{i+1} \lambda_{i,i+1}^{\beta} \rho_{i+1}\dots \rho_{j-2} \rho_{j-1}
\end{align*} 
for $1\leq i<j-1 \leq n-1$ and $\beta=1,2,\ldots, k-1.$

\begin{lemma}
   Let $a$ be an element of $\big \langle \rho_1,\rho_2,\dots, \rho_{n-1} \rangle $ and $\Bar{a}$ be its image in $S_n$ under the isomorphism $\rho_i \mapsto (i \ \ i+1), ~~i=1,2,\dots,n-1.$ Then for any generator $\lambda_{i,j}^{\alpha}$ of $M_kVPT_n$ the following holds: \[ a^{-1} \lambda_{i,j}^{\alpha} a = \lambda_{(i)\Bar{a},(j)\Bar{a}}^{\alpha} , \] where $(l)\Bar{a}$ is the image of $l$ under the action of the permutation $\Bar{a}$.
\end{lemma}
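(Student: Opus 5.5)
Since $\langle \rho_1,\dots,\rho_{n-1}\rangle$ is generated by the involutions $\rho_l=\rho_l^0$, and both sides of the identity behave well under composition ($(ab)^{-1}\lambda(ab)=b^{-1}(a^{-1}\lambda a)b$ and $\overline{ab}$ acts as $\bar a$ followed by $\bar b$ under the right-action convention $(l)\bar a$), I reduce the statement to the case $a=\rho_l$ for a single $l$. So the plan is to check
\[
\rho_l\,\lambda^{\alpha}_{i,j}\,\rho_l=\lambda^{\alpha}_{(i)\sigma_l,(j)\sigma_l},\qquad \sigma_l=(l\ \ l+1),
\]
for all $l$, all ordered pairs $i\neq j$ (with $i<j$ when $\alpha\ge1$, and any order when $\alpha=0$), and all $\alpha$. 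Here I would first extend the notation consistently. For $\alpha\ge 1$, I set $\lambda^{\alpha}_{j,i}:=\rho_{j-1}\cdots\rho_{i+1}\,\rho_i\lambda^{\alpha}_{i,i+1}\rho_i\,\rho_{i+1}\cdots\rho_{j-1}$ for $i<j$, exactly as the paper does for $\alpha=0$. This lets the statement make sense when $\bar a$ reverses the order of $i,j$.

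The verification splits by the position of $l$ relative to $\{i,j\}$, with $i<j$ say.
\begin{itemize}
\item If $l+1<i$ or $l>j$, then $\rho_l$ commutes with every letter in the defining word of $\lambda^{\alpha}_{i,j}$. This uses the far commutation relations among the $\rho$'s, between $\rho$ and $s$, and between $\rho^0$ and $\rho^\beta$. Both the pair of indices and the element are unchanged.
\item If $l=j$, the conjugation merely lengthens the defining word, giving $\lambda_{i,j+1}^\alpha$ by definition.
\item If $l=j-1>i$, the conjugation shortens the word, since $\rho_{j-1}^2=1$, giving $\lambda_{i,j-1}^\alpha$.
\item If $l=i-1$, I use braid relations in $\langle\rho_1,\dots,\rho_{n-1}\rangle$ (this subgroup maps isomorphically to $S_n$, as the statement assumes) to move $\rho_{i-1}$ through the prefix $\rho_{j-1}\cdots\rho_{i+1}$. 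This reduces to the base identity $\rho_{i-1}\rho_i\,\lambda^{\alpha}_{i-1,i}\,\rho_i\rho_{i-1}=\lambda^\alpha_{i,i+1}$ up to relabeling, i.e.\ $\rho_i\rho_{i+1}\lambda^\alpha_{i,i+1}\rho_{i+1}\rho_i=\lambda^\alpha_{i+1,i+2}$.
\item If $l=i<j-1$, I argue in the same way, swapping the roles of $i$ and $i+1$.
\item If $l=i$ and $j=i+1$, the claim is $\rho_i\lambda^\alpha_{i,i+1}\rho_i=\lambda^\alpha_{i+1,i}$, which is the definition.
\end{itemize}

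The key base identity is $\rho_{i}\rho_{i+1}\lambda^{\alpha}_{i,i+1}\rho_{i+1}\rho_{i}=\lambda^{\alpha}_{i+1,i+2}$. For $\alpha=0$, with $\lambda^0_{i,i+1}=\rho_is_i$, this reads $\rho_i\rho_{i+1}\rho_i s_i\rho_{i+1}\rho_i=\rho_{i+1}s_{i+1}$. It follows from the mixed relation $\rho_i\rho_{i+1}s_i=s_{i+1}\rho_i\rho_{i+1}$ together with the braid relation for $\rho$: the left side equals $\rho_{i+1}\rho_i\rho_{i+1}s_i\rho_{i+1}\rho_i$, and the relation rewritten as $\rho_{i+1}s_i\rho_{i+1}=\rho_i s_{i+1}\rho_i$ finishes it. For $\alpha=\beta\ge1$, with $\lambda^\beta_{i,i+1}=\rho_i\rho_i^\beta$, I need $\rho_i\rho_{i+1}\rho_i^\beta\rho_{i+1}\rho_i=\rho_{i+1}^\beta$. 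This should follow from the mixed relation $\rho_i^{0}\rho_{i+1}^{0}\rho_i^{\beta}=\rho_{i+1}^{\beta}\rho_i^{0}\rho_{i+1}^{0}$, which is the relation with $\alpha=0<\beta$.

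The main obstacle is the bookkeeping for reversed-index generators when $\alpha\ge1$. I must confirm that the extended definition of $\lambda^\beta_{j,i}$ is compatible with all the moves, e.g.\ that $\rho_i\lambda^\beta_{i,j}\rho_i$ for $j>i+1$ lands on the right reversed symbol. I would handle this by proving every case for $\lambda^{\alpha}_{i,i+1}$ and $\lambda^\alpha_{i+1,i}$ first. Then every $\lambda^\alpha_{p,q}$ can be written as $w^{-1}\lambda^\alpha_{1,2}w$ for some $w\in\langle\rho\rangle$ with $(1)\bar w=p$, $(2)\bar w=q$. Showing this element depends only on $(p,q)$ amounts to checking that the stabilizer of $(1,2)$, generated by $\rho_3,\dots,\rho_{n-1}$, centralizes $\lambda^\alpha_{1,2}$, which holds by the far commutation relations. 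The lemma then follows immediately, since $a^{-1}w^{-1}\lambda^\alpha_{1,2}wa$ corresponds to the pair $((p)\bar a,(q)\bar a)$.
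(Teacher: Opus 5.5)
Your proof is correct. It starts like the paper's proof, reducing to $a=\rho_l$ and splitting by the position of $l$, but the step that actually closes it is different.

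The paper checks $\rho_k\lambda^{\beta}_{i,j}\rho_k$ in six positional cases. It computes (a)--(c) and leaves the rest as ``similar calculations'', and its key input (**), $\rho_{i-1}\lambda^\beta_{i,i+1}\rho_{i-1}=\rho_i\lambda^\beta_{i-1,i}\rho_i$, is asserted only ``after using the relations''. Your base identity $\rho_i\rho_{i+1}\lambda^\alpha_{i,i+1}\rho_{i+1}\rho_i=\lambda^\alpha_{i+1,i+2}$ is equivalent to (**), and you actually derive it. You use $\rho_i\rho_{i+1}\rho_i=\rho_{i+1}\rho_i\rho_{i+1}$ together with $\rho_i\rho_{i+1}s_i=s_{i+1}\rho_i\rho_{i+1}$ for $\alpha=0$, or with $\rho_i\rho_{i+1}\rho_i^\beta=\rho_{i+1}^\beta\rho_i\rho_{i+1}$ for $\alpha=\beta\ge 1$.

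Your orbit--stabilizer paragraph then replaces the case list entirely. Each $\lambda^\alpha_{p,q}$ equals $w^{-1}\lambda^\alpha_{1,2}w$ with $\bar w$ sending $(1,2)$ to $(p,q)$, and $\langle\rho_3,\dots,\rho_{n-1}\rangle$ centralizes $\lambda^\alpha_{1,2}$ by far commutativity. So the lemma holds for every $a$ at once, with no reduction to generators needed.

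You should lean on that argument rather than on your bullets. The bulleted list by itself omits the case $i<l<j-1$, which is the paper's case (b). There $\sigma_l$ fixes $i$ and $j$, but $\rho_l$ does not commute letter by letter with the defining word; one needs the braid relation to turn it into $\rho_{l+1}$. The stabilizer argument covers this case automatically.

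Your route also makes explicit the symbols $\lambda^\beta_{j,i}$ for $\beta\ge1$, which the paper uses in case (d) and in Section 5 without defining. Your definition gives $\lambda^\beta_{j,i}=(\lambda^\beta_{i,j})^{-1}$, exactly as happens for $\alpha=0$, so it is the only consistent choice.

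The price is that your well-definedness step needs injectivity of $\langle\rho_1,\dots,\rho_{n-1}\rangle\to S_n$. The paper's generator-by-generator check needs only the homomorphism. This is harmless, since the statement grants the isomorphism and it also follows from the splitting map $\theta$.

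One minor slip: in your case $l=i-1$ no braid relations are needed, since $\rho_{i-1}$ commutes with $\rho_{i+1},\dots,\rho_{j-1}$ outright.
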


\begin{proof}
    We prove the lemma by considering $a$ as a generator of the group $$\big \langle \rho_1,\rho_2,\dots, \rho_{n-1} \rangle .$$ Specifically, we take $a=\rho_k$ for various values of $k$. We have:
    \[
     \lambda_{i,j}^{\beta} = \rho_{j-1} \rho_{j-2} \dots \rho_{i+1} \lambda_{i,i+1}^{\beta} \rho_{i+1}\dots \rho_{j-2} \rho_{j-1}.
    \]
Remark that it suffices to consider the case \(i<j\), as the other cases follow similarly.
\begin{itemize}
        \item[(a)] Assume that $k<i-1$ or $k>j.$ Then we have
        \[
        \rho_k \lambda_{i,j}^{\beta} \rho_k = \rho_k (\rho_{j-1} \rho_{j-2} \dots \rho_{i+1} \lambda_{i,i+1}^{\beta} \rho_{i+1}\dots \rho_{j-2} \rho_{j-1})\rho_k.
        \]
        In this case, $\rho_k$ is permutable with $\rho_i,\rho_{i+1}, \dots,\rho_{j-1}$ , $s_i$, and $\rho_i^{\beta}$. Hence 
        \[
        \rho_k \lambda_{i,j}^{\beta} \rho_k = \lambda_{i,j}^{\beta}.
        \]
        Similarly, we obtain that 
        $$\rho_k \lambda_{j,i}^{\beta} \rho_k = \lambda_{j,i}^{\beta}.$$ 
        \item[(b)] Assume that $i<k<j-1$. Then we have
        \begin{align*}
            \rho_k \lambda_{i,j}^{\beta} \rho_k &= \rho_k (\rho_{j-1} \rho_{j-2} \dots \rho_{i+1} \lambda_{i,i+1}^{\beta} \rho_{i+1}\dots \rho_{j-2} \rho_{j-1})\rho_k\\
            &=\rho_{j-1} \dots  \rho_{k+2}(\rho_{k}\rho_{k+1}\rho_{k})\dots\rho_{i+1} \lambda_{i,j}^{\beta}\rho_{i+1}\dots (\rho_{k}\rho_{k+1}\rho_{k}) \rho_{k+2} \dots \rho_{j-1}.
        \end{align*}
        Using the relation (3.6), we have
        \begin{align*}
        & \rho_k \lambda_{i,j}^{\beta} \rho_k \\
            &= \rho_{j-1} \dots  \rho_{k+2}\rho_{k+1}\rho_{k} (\rho_{k+1}\rho_{k-1} \dots \rho_{i+1}\lambda_{i,j}^{\beta}\rho_{i+1}\dots \rho_{k-1}\rho_{k+1})\rho_{k}\rho_{k+1} \rho_{k+2} \dots \rho_{j-1}\\
            &= \rho_{j-1} \dots \rho_{k}(\rho_{k+1} \lambda_{i,k}^{\beta}\rho_{k+1})\rho_{k} \dots \rho_{j-1}.
        \end{align*}
       By case (a) we have
        \[
        \rho_{k+1} \lambda_{i,k}^{\beta}\rho_{k+1} = \lambda_{ik},
        \]
        and so we finally get that
        \[
        \rho_{j-1} \dots \rho_{k}(\rho_{k+1} \lambda_{i,k}^{\beta}\rho_{k+1})\rho_{k} \dots \rho_{j-1}= \lambda_{ij}^{\beta}.
        \]
        Similarly, we obtain that 
        $$\rho_k \lambda_{j,i}^{\beta} \rho_k = \lambda_{j,i}^{\beta}.$$
        \item[(c)] Assume now that $k=i-1$. Then we have
\begin{align*}
     \rho_k \lambda_{i,j}^{\beta} \rho_k &= \rho_{i-1} (\rho_{j-1} \rho_{j-2} \dots \rho_{i+1} \lambda_{i,i+1}^{\beta} \rho_{i+1}\dots \rho_{j-2} \rho_{j-1})\rho_{i-1}\\
     &= \rho_{j-1} \rho_{j-2} \dots \rho_{i+1} (\rho_{i-1} \lambda_{i,i+1}^{\beta} \rho_{i-1}) \rho_{i+1}\dots \rho_{j-2} \rho_{j-1}. ~~~~-(*)
\end{align*}
    Now using the equalities 
    \begin{align*}
        \lambda_{i,i+1}^0 &= \rho_is_i,\\
    \lambda_{i,i+1}^{\beta} &= \rho_i\rho_i^{\beta}, 1\leq \beta \leq k-1,
    \end{align*}
    we get that
    \begin{align*}
        \rho_{i-1} \lambda_{i,i+1}^{\beta} \rho_{i-1} &= \rho_{i-1} \rho_i s_i \rho_{i-1} \text{ if } \beta=0,\\
         \rho_{i-1} \lambda_{i,i+1}^{\beta} \rho_{i-1} &=\rho_{i-1} \rho_i \rho_i^{\beta} \rho_{i-1} \text{ if } \beta \neq 0.
    \end{align*}
    In both the cases, after using the relations of $M_kVT_n$, we obtain that
    \[
    \hspace{2.5cm} \rho_{i-1} \lambda_{i,i+1}^{\beta} \rho_{i-1} = \rho_{i} \lambda_{i-1,i}^{\beta} \rho_{i}. \hspace{3cm}~-(**)
    \]
    Using $(*)$ and $(**)$, we clearly obtain that 
    $$\rho_k \lambda_{i,j}^{\beta} \rho_k = \lambda_{i-1,j}^{\beta}.$$
Again, in a similar manner, we obtain that 
$$\rho_k \lambda_{j,i}^{\beta} \rho_k = \lambda_{j,i-1}^{\beta}.$$
\item[(d)] Assume that $k=i$ and $i<j-1$. Similar calculations to the cases (a)--(c) imply that
\begin{align*}
    \rho_i \lambda_{i,i+1}^{\beta} \rho_i &= \lambda_{i+1,i}^{\beta},\\
    \rho_i \lambda_{i,j}^{\beta} \rho_i &= \lambda_{i+1,j}^{\beta},\\
\rho_i \lambda_{i+1,i}^{\beta} \rho_i &= \lambda_{i,i+1}^{\beta},\\
\rho_i \lambda_{j,i}^{\beta} \rho_i &= \lambda_{j,i+1}^{\beta}.
\end{align*}
\item[(e)] Assume now that $i+1<j$. Again, similar calculations done in the cases (a)--(c) give that
\begin{align*}
    \rho_{j-1} \lambda_{i,j}^{\beta} \rho_{j-1} &= \lambda_{i,j-1}^{\beta},\\
    \rho_{j-1} \lambda_{j,i}^{\beta} \rho_{j-1} &= \lambda_{j-1,i}^{\beta}.
\end{align*}
\item[(f)] Finally, assume that $k=j$. Also, similar calculations done in the cases (a)--(c) imply that
\begin{align*}
    \rho_{j} \lambda_{i,j}^{\beta} \rho_{j} &= \lambda_{i,j+1}^{\beta},\\
    \rho_{j} \lambda_{j,i}^{\beta} \rho_{j} &= \lambda_{j+1,i}^{\beta}.
\end{align*}
        \end{itemize}
\end{proof}

\begin{theorem}
    The group $M_kVPT_n$ admits a presentation with the generators \[\lambda_{i,j}^0,~ 1\leq i \neq j \leq n, \text{ and } \lambda_{i,j}^{\beta},~ 1\leq i < j \leq n, \  \beta=1,2,\ldots, k-1,\]
    and the following defining relations: 
    \begin{align}
        \lambda_{i,j}^{0} \lambda_{k,l}^{0} &= \lambda_{k,l}^{0} \lambda_{i,j}^{0},\\
        \lambda_{i,j}^{\alpha} \lambda_{k,l}^{\gamma} &= \lambda_{k,l}^{\gamma} \lambda_{i,j}^{\alpha},~1 \leq \alpha \leq \gamma \leq k-1,\\
        \lambda_{i,j}^{\alpha} \lambda_{i,k}^{\alpha} \lambda_{j,k}^{\alpha} &= \lambda_{j,k}^{\alpha} \lambda_{i,k}^{\alpha} \lambda_{i,j}^{\alpha},~ \alpha=1,2,\ldots ,k-1,\\ 
         \lambda_{i,j}^{\alpha} \lambda_{i,k}^{\beta} \lambda_{j,k}^{\beta} &= \lambda_{j,k}^{\beta} \lambda_{i,k}^{\beta} \lambda_{i,j}^{\alpha},~1 \leq \alpha < \beta \leq k-1,\\
          \lambda_{i,j}^{\alpha} \lambda_{i,k}^{\alpha} \lambda_{j,k}^{\beta} &= \lambda_{j,k}^{\beta} \lambda_{i,k}^{\alpha} \lambda_{i,j}^{\alpha},~ 1\leq  \alpha < \beta \leq k-1,\\
           \lambda_{i,j}^{\alpha} \lambda_{i,k}^{\alpha} \lambda_{j,k}^{0} &= \lambda_{j,k}^{0} \lambda_{i,k}^{\alpha} \lambda_{i,j}^{\alpha},  \ \alpha=1,2,\ldots ,k-1,\\
          \lambda_{i,k}^{\beta} \lambda_{j,k}^{\beta} &= \lambda_{j,k}^{\beta} \lambda_{i,k}^{\beta}, \ \beta=1,2,\ldots ,k-1,
          \end{align}
    where distinct letters $i,j,k,l$ stand for distinct indices.
\end{theorem}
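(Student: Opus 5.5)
We will apply the Reidemeister--Schreier method to the short exact sequence $1\to M_kVPT_n\to M_kVT_n\to S_n\to 1$, using the Schreier transversal $\bigwedge_n$ already used for $VT_n$ in~\cite{TNM}. Every element of $\bigwedge_n$ is a word in $\rho_1,\dots,\rho_{n-1}$ only. For a generator $x\in\{s_i,\rho_i^{\alpha}\}$ and a representative $\omega\in\bigwedge_n$, the Schreier generators are $S_{\omega,x}=\omega x\,(\overline{\omega x})^{-1}$.

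\textbf{Identifying the generators.} Since $\bigwedge_n$ consists of words in the $\rho_i=\rho_i^0$, every $S_{\omega,\rho_i}$ is trivial; this holds because the $\rho_i$ satisfy exactly the Coxeter relations of $S_n$, and $\bigwedge_n$ is a transversal for the copy $\theta(S_n)$. For $x=s_i$, write $S_{\omega,s_i}=\omega s_i\rho_i\omega^{-1}\cdot(\omega\rho_i\overline{\omega\rho_i}^{-1})$. The second factor is trivial, so $S_{\omega,s_i}=\omega\lambda^0_{i+1,i}\omega^{-1}$. Similarly, for $\beta\ge1$ we get $S_{\omega,\rho_i^\beta}=\omega\rho_i^\beta\rho_i\omega^{-1}=\omega(\lambda^\beta_{i,i+1})^{-1}\omega^{-1}$. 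By the preceding Lemma, conjugating by $\omega$ permutes indices via $\bar\omega$, so every Schreier generator is some $\lambda^0_{p,q}$ or some $(\lambda^\beta_{p,q})^{\pm1}$, and each such symbol is attained.

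Two points need checking here. First, for $\beta\ge1$ only $p<q$ occurs. This should follow from $\rho_i\lambda^\beta_{i,i+1}\rho_i=\rho_i^\beta\rho_i=(\lambda^\beta_{i,i+1})^{-1}$, using $(\rho_i^\beta)^2=1$. So the "reversed" symbol is just the inverse and is not a new generator; I would record this as $\lambda^\beta_{q,p}:=(\lambda^\beta_{p,q})^{-1}$. Second, for $\beta=0$ the elements $\rho_is_i$ and $s_i\rho_i$ are genuinely distinct, so both $\lambda^0_{p,q}$ and $\lambda^0_{q,p}$ are kept. This matches the generating set in the statement.

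\textbf{Rewriting the relations.} Each defining relation $r$ of $M_kVT_n$ is conjugated by every $\omega\in\bigwedge_n$ and rewritten via the rewriting process $\tau$. By the Lemma it suffices to rewrite $r$ once, with $\omega=1$, and then let $S_n$ act on indices. The cases are as follows.
\begin{itemize}
\item Relations involving only $\rho^0$ give trivial relations.
\item $s_i^2=1$ and $(\rho_i^\beta)^2=1$ give $\lambda^0_{i,i+1}\lambda^0_{i+1,i}=\dots$-type identities. These either are absorbed by the convention on inverses or are trivial. I expect $s_i^2$ to give $\lambda^0_{i+1,i}\,\rho_i\lambda^0_{i+1,i}\rho_i=1$, i.e.\ a tautology once the Lemma is applied, since $s_i\rho_i\cdot\rho_is_i=1$.
\item The far commutations give (3.20) and (3.21), together with their mixed $0$/$\beta$ versions.
\item The mixed relation $\rho_i\rho_{i+1}s_i=s_{i+1}\rho_i\rho_{i+1}$ gives an identity of the form $\lambda^0_{p,q}=\lambda^0_{p',q'}$ already forced by the Lemma, hence trivial.
\item The relations $\rho_i^\alpha\rho_{i+1}^\alpha s_i=s_{i+1}\rho_i^\alpha\rho_{i+1}^\alpha$ for $\alpha\ge1$ give (3.25).
\item The three families of braid-type relations (3.16)--(3.18), with $\alpha,\beta\ge1$ or with $\alpha=0$, give (3.22)--(3.24) and (3.26).
\end{itemize}
Concretely, for $\rho_i^\alpha\rho_{i+1}^\alpha\rho_i^\alpha=\rho_{i+1}^\alpha\rho_i^\alpha\rho_{i+1}^\alpha$ I would insert $\rho_i\rho_i$ and $\rho_{i+1}\rho_{i+1}$ between letters. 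This expresses each side as a product of three conjugates of $\lambda^\alpha$'s times the same permutation word, and these cancel by relation (3.6) for $\rho^0$. The result is $\lambda^\alpha_{i,i+1}\lambda^\alpha_{i,i+2}\lambda^\alpha_{i+1,i+2}=\lambda^\alpha_{i+1,i+2}\lambda^\alpha_{i,i+2}\lambda^\alpha_{i,i+1}$, possibly after inverting both sides, which is legitimate. Acting by $S_n$ then yields all distinct triples $i,j,k$. Because of the inverse convention, only triples with a consistent ordering survive as new relations. The mixed cases are handled the same way, with the exponent pattern $(\alpha,\beta,\beta)$ or $(\alpha,\alpha,\beta)$ carried through.

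\textbf{Main obstacle.} The hard part is the bookkeeping of the index action: the Lemma permutes indices of $\lambda^\beta_{p,q}$, which may turn $p<q$ into $p>q$ and hence into an inverse. One must check that every conjugate of a rewritten relation is equivalent to one of (3.20)--(3.26) with distinct indices, and that no additional independent relation survives. I would handle this by first writing every relator in the symmetric form with $\lambda^\beta_{q,p}=(\lambda^\beta_{p,q})^{-1}$. Then I would verify that the listed family of relations is closed under the $S_n$-action up to inversion and equivalent rearrangement. For the redundant relators (squares, the $\rho^0$ braid relations, and the $\alpha=0$ twist relation), I would show that they rewrite to the empty word or to consequences of the Lemma.
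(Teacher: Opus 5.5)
Your method is the paper's: Reidemeister--Schreier with the transversal $\bigwedge_n$, the index-permutation Lemma, and rewriting each relator of $M_kVT_n$. The step that fails is the conclusion that the output is exactly the stated presentation. Two of your own computations contradict it.

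\textbf{First point: the $\beta=0$ generators.} The asymmetry you claim between $\beta=0$ and $\beta\ge1$ does not exist. Since $s_i^2=\rho_i^2=1$, we have $\lambda^0_{i+1,i}=s_i\rho_i=(\rho_is_i)^{-1}=(\lambda^0_{i,i+1})^{-1}$, exactly as $\rho_i^\beta\rho_i=(\rho_i\rho_i^\beta)^{-1}$. That $\rho_is_i\neq s_i\rho_i$ is beside the point, since $\rho_i\rho_i^\beta\neq\rho_i^\beta\rho_i$ as well. If you nevertheless keep $\lambda^0_{p,q}$ and $\lambda^0_{q,p}$ as separate generators, the rewrite of $s_i^2$ is $S_{\omega,s_i}S_{\overline{\omega s_i},s_i}=\lambda^0_{q,p}\lambda^0_{p,q}$. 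This is a genuine defining relation, not ``a tautology once the Lemma is applied''. The Lemma is an identity between elements of $M_kVT_n$, whereas in the presentation you are building these symbols are free generators, and this relator is exactly what makes one the inverse of the other. It does not follow from the seven listed relations. Send every $\lambda^0_{p,q}$ ($p\neq q$) to $1\in\mathbb{Z}$ and every $\lambda^\beta_{p,q}$ to $0$. Each listed relation is a commutation or of the form $XYZ=ZYX$, so all hold. Yet $\lambda^0_{1,2}\lambda^0_{2,1}$ goes to $2$, whereas in $M_kVPT_n$ it equals $\rho_1s_1s_1\rho_1=1$.

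\textbf{Second point: the mixed commutations.} You rightly note that the far commutations also give ``mixed $0/\beta$ versions''. Rewriting $\rho_i^\beta s_j=s_j\rho_i^\beta$ ($|i-j|\ge2$, $\beta\ge1$) yields $\lambda^\beta_{p,q}\lambda^0_{r,s}=\lambda^0_{r,s}\lambda^\beta_{p,q}$ for distinct $p,q,r,s$. These are not in the stated list, which only has $\lambda^0$--$\lambda^0$ commutations and $\lambda^\alpha$--$\lambda^\gamma$ commutations with $\alpha,\gamma\ge1$. For $n\ge4$, $k\ge2$ they are not consequences of it. Send every $\lambda^\beta_{p,q}$ to $(1\,2)\in S_3$, send $\lambda^0_{3,4}$ and $\lambda^0_{4,3}$ to $(2\,3)$, and send all other $\lambda^0_{p,q}$ to $1$. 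Every listed relation holds; the only mixed one, $\lambda^\alpha_{i,j}\lambda^\alpha_{i,k}\lambda^0_{j,k}=\lambda^0_{j,k}\lambda^\alpha_{i,k}\lambda^\alpha_{i,j}$, holds because $(1\,2)^2=1$. But $\lambda^1_{1,2}$ and $\lambda^0_{3,4}$ do not commute there, while in $M_kVPT_n$ the elements $\rho_1\rho_1^1$ and $\rho_3s_3$ do.

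So your closing check that ``no additional independent relation survives'' cannot succeed. Carried out correctly, Reidemeister--Schreier gives:
\begin{itemize}
\item generators $\lambda^0_{p,q}$ and $\lambda^\beta_{p,q}$ with $p<q$;
\item the convention $\lambda^\gamma_{q,p}=(\lambda^\gamma_{p,q})^{-1}$ for every $\gamma$, including $\gamma=0$;
\item the listed relations, read with this convention;
\item in addition, the mixed commutations.
\end{itemize}
This is a different presentation from the one stated. The paper's proof has the same two omissions. It evaluates $r_{1,e}=s_{e,s_i}s_{\rho_i,s_i}$ in the group and calls it trivial. It also places $\rho_i^\alpha s_j=s_j\rho_i^\alpha$ with $\alpha\ge1$ among the relations giving ``only trivial relations''. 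The mismatch is therefore with the statement itself, not only with your write-up.
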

\begin{proof}
    Let $\Bar{}: M_kVT_n \longrightarrow \bigwedge_n $ be a map that assigns to each element $x \in M_kVT_n$ its chosen coset representative $\Bar{x} \in \bigwedge_n.$ By construction, the element $x \Bar{x}^{-1}$ lies in the subgroup $M_kVPT_n$. According to the Reidemeister Schreier theorem \cite[Theorem~2.7]{MKS}, the subgroup $M_kVPT_n$ is generated by elements of the form 
    \[ s_{\lambda,a} = \lambda a \overline{(\lambda a)}^{-1},\] where $\lambda \in \bigwedge_n,  a \in \{s_1,s_2,\dots,s_{n-1},\rho_1^{\alpha},\rho_2^{\alpha},\dots,\rho_{n-1}^{\alpha}\}, \text{ and } \alpha =0,1,\dots,k-1. $
    \begin{itemize}
        \item [Case 1:]  If $a=s_i$, then using Lemma 3.1, we get:
        \[s_{\lambda,s_i}= (\lambda s_i)\overline{(\lambda s_i)}^{-1}=\lambda s_i \rho_i \lambda^{-1}= \lambda \lambda_{i,i+1}^{-1} \lambda^{-1} = \lambda_{j,l} \text{ for } 1\leq j \neq l \leq n.\]
        \item [Case 2:] If $a=\rho_i^{\alpha}$, we consider the following sub cases:
        \begin{itemize}
            \item[(a)] If $\alpha=0$, then  
            \[s_{\lambda,\rho_i^0}=(\lambda \rho_i^0)\overline{(\lambda \rho_i^0)}^{-1}=e \text{ for } 1\leq i \leq n-1.\]
            \item[(b)] If $\alpha \neq 0$, then
            \[s_{\lambda,\rho_i^{\alpha}}=(\lambda \rho_i^{\alpha})\overline{(\lambda \rho_i^{\alpha})}^{-1}=\lambda \rho_i^{\alpha} \rho_i^{0} \lambda^{-1} = (\lambda \lambda_{i,i+1}^{\alpha} \lambda^{-1})^{-1} =\lambda_{j,l}^{\alpha}\] for $1\leq j < l \leq n$ and $1\leq \alpha \leq k-1.$
        \end{itemize}
    \end{itemize}
    Therefore, the group $M_kVPT_n$ is generated by the elements $\lambda_{k,l}^0$ for $1 \leq k \neq l \leq n$ together with $\lambda_{i,j}^{\beta}$ for $1 \leq i < j \leq n$ and $1 \leq \beta \leq k-1.$
    
    Now, to determine the defining relations of $M_kVPT_n$, we introduce a rewriting process $\tau$. This process assigns to each word $u$ written in the generators of $M_kVT_n$, which represents an element of $M_kVPT_n$, a word $\tau(u)$ expressed in the generators of $M_kVPT_n$.
    Let \[u=x_1^{\epsilon_1}x_2^{\epsilon_2} \dots x_r^{\epsilon_r}, ~\epsilon_l=\pm1\]
    be a reduced word in the generators  
    \[ x_l \in \{s_1,s_2,\dots,s_{n-1},\rho_1^0,\rho_2^0,\dots,\rho_{n-1}^0, \dots, \rho_1^{k-1}, \rho_2^{k-1},\dots,\rho_{n-1}^{k-1} \}. \]
    We associate to $u$ the word \[ \tau(u)= s_{k_1,x_1}^{\epsilon_1} s_{k_2,x_2}^{\epsilon_2} \dots s_{k_r,x_r}^{\epsilon_r},\]
    which is written in the generators of $M_kVPT_n$.
    Here, for each $j=1,2,\dots,r,$ the element $k_j \in \bigwedge_n$ is defined as follows:
    \begin{itemize}
        \item if $\epsilon_j=1$, then $k_j$ is the Schreier representative of the $(j-1)^{th}$ initial segment of the word $u$,
        \item if $\epsilon_j=-1$, then $k_j$ is the Schreier representative of the $(j)^{th}$ initial segment of the word $u$.
    \end{itemize}
    By \cite[Theorem~2.9]{MKS}, the group $M_kVPT_n$ is defined by the relations   \[ r_{\mu,\lambda} = \tau(\lambda r_{\mu} \lambda^{-1}) = \lambda \tau(r_{\mu}) \lambda^{-1}, \lambda \in \bigwedge_n,  \] where $r_{\mu}$ is a defining relation of $M_kVT_n.$
    We now derive the defining relations of $M_kVPT_n$ corresponding to the relations of $M_kVT_n$ in the following cases:
    \begin{itemize}
        \item[i)] For the relation (3.1), let $r_1 = s_i^{2} =1$, $i=1,2,\ldots,n-1$. Then, we have
    \begin{align*}
             r_{1,e}&=\tau(r_1) \\
             &= s_{e,s_i}s_{\rho_i,s_i}\\
             &= (s_i \rho_i)(\rho_i s_i \rho_i \rho_i)\\
             &= e.
        \end{align*}
         which do not give rise to any nontrivial relations in $M_kVPT_n$.
        \item[ii)] For the relations (3.2), let $r_2= s_i s_j s_i^{-1} s_j^{-1}=s_i s_j s_i s_j=1,~|i-j|\geq2$. Then, we have
        \begin{align*}
            r_{2,e} &= \tau(r_2)\\ &= s_{e,s_i}s_{\rho_i,s_j}s_{\rho_i \rho_j,s_i}s_{\rho_i \rho_j \rho_i,s_j}\\
            &=(s_i \rho_i)(s_j \rho_j)(\rho_i s_i)(\rho_j s_j)\\
            &=(\lambda_{i,i+1}^0)^{-1} (\lambda_{j,j+1}^0)^{-1} (\lambda_{i,i+1}^0) (\lambda_{j,j+1}^0).
        \end{align*}
        Thus, using Lemma (3.1), and for $\lambda \in \bigwedge_n$, we get that
        \begin{align*}
            r_{2,\lambda} &= \tau(\lambda r_{2} \lambda^{-1}) \\&= \lambda \tau(r_{2}) \lambda^{-1}\\
            &= \lambda (\lambda_{i,i+1}^0)^{-1}(\lambda_{j,j+1}^0)^{-1} (\lambda_{i,i+1}^0) (\lambda_{j,j+1}^0) \lambda^{-1}\\
            &= (\lambda_{i,j}^0)^{-1} (\lambda_{k,l}^0)^{-1} (\lambda_{i,j}^0) (\lambda_{k,l}^0),
        \end{align*}
        which gives the relation (3.10).

        \item[iii)] For the relation (3.4), let $r_3=\rho_i^{\alpha} \rho_j^{\beta} \rho_i^{\alpha} \rho_j^{\beta},~ \alpha \neq 0,\beta\neq 0, |i-j| \geq 2$. Then, we have
        \begin{align*}
            r_{3,e} &= \tau(r_3)\\
            &=s_{e,\rho_i^{\alpha}} s_{\rho_i, \rho_j^{\beta}} s_{\rho_i\rho_j,\rho_i^{\alpha}}s_{\rho_i\rho_j\rho_i,\rho_j^{\beta}}\\
            &=(\lambda_{i,i+1}^{\alpha})^{-1} (\lambda_{j,j+1}^{\beta})^{-1} (\lambda_{i,i+1}^{\alpha}) (\lambda_{j,j+1}^{\beta}).
        \end{align*}
        Thus, using Lemma (3.1), and for $\lambda \in \bigwedge_n$, we get that
        \begin{align*}
             r_{3,\lambda} &= \tau(\lambda r_{3} \lambda^{-1})\\ & = \lambda \tau(r_{3}) \lambda^{-1}\\
             &= (\lambda_{i,j}^{\alpha})^{-1} (\lambda_{k,l}^{\beta})^{-1} (\lambda_{i,j}^{\alpha}) (\lambda_{k,l}^{\beta}),
        \end{align*}
        which gives the relation (3.11). 
        \item[iv)] For the relations (3.6), let $r_4=\rho_{i}^{\alpha}\rho_{i+1}^{\alpha}\rho_{i}^{\alpha}(\rho_{i+1}^{\alpha})^{-1} (\rho_{i}^{\alpha})^{-1}(\rho_{i+1}^{\alpha})^{-1},\alpha \neq 0$. Then, we have
        \begin{align*}
            r_{4,e} &= \tau(r_4)\\ &=s_{e,\rho_i^{\alpha}}s_{\rho_i,\rho_{i+1}^{\alpha}}s_{\rho_i\rho_{i+1},\rho_{i}^{\alpha}}s_{\rho_i\rho_{i+1}\rho_i,\rho_{i+1}^{\alpha}}s_{\rho_i\rho_{i+1}\rho_i \rho_{i+1},\rho_{i}^{\alpha}} s_{\rho_i\rho_{i+1}\rho_i \rho_{i+1}\rho_i,\rho_{i+1}^{\alpha}}\\
            &=(\lambda_{i,i+1}^{\alpha})^{-1} (\lambda_{i,i+2}^{\alpha})^{-1} (\lambda_{i+1,i+2}^{\alpha})^{-1} (\lambda_{i,i+1}^{\alpha}) (\lambda_{i,i+2}^{\alpha}) (\lambda_{i+1,i+2}^{\alpha}).
        \end{align*}
        Again, using Lemma (3.1), and for $\lambda \in \bigwedge_n$, we get that
       \begin{align*}
            r_{4,\lambda} &= \tau(\lambda r_{4} \lambda^{-1})\\ &= \lambda \tau(r_{4}) \lambda^{-1}\\
            &= (\lambda_{j,k}^{\alpha})^{-1} (\lambda_{i,k}^{\alpha})^{-1} (\lambda_{i,j}^{\alpha})^{-1} (\lambda_{j,k}^{\alpha})(\lambda_{i,k}^{\alpha})(\lambda_{i,j}^{\alpha}),
       \end{align*}
       which gives the relation (3.12). On the other hand, for $\alpha =0$ in this case we get trivial relation.
    \end{itemize}
   The remaining relations are derived in a similar manner as above. In particular, relations (3.13), (3.14), and (3.15) follow from relations (3.7), (3.8), and (3.9), respectively, using arguments analogous to those in case iv). Moreover, relation (3.16) is obtained from relation (3.7) by taking $\alpha = 0$. The remaining relations of $M_kVT_n$ yield only trivial relations in $M_kVPT_n$ and this completes the proof.
   \end{proof}

\subsection{Multi-Virtual Semi-Pure Twin Group}

We now introduce another subgroup of the multi-virtual twin group, called the multi-virtual semi-pure twin group, denoted by \(M_kVHT_n\). Define the map $$\psi_{n,k}: M_kVT_n \longrightarrow S_n $$ given by \[ \Psi_{n,k}(s_i)=e \ \text{ and }\  \Psi_{n,k}(\rho_i^{\alpha})=(i \ \ i+1) \]
for $i=1,2,\dots,n-1$ and $\alpha = 0,1,\dots,k-1,$ where $e$ is the identity element of $S_n$. We define the multi-virtual semi-pure twin group \(M_kVHT_n\) as the kernel of the map $\Psi_{n,k}$. Then, $M_kVHT_n$ is a normal subgroup of $M_kVT_n$, and we obtain the following short exact sequence
\[
1 \longrightarrow M_kVHT_n \longrightarrow M_kVT_n \longrightarrow S_n \longrightarrow 1.
\] 
Hence, we can see that $$M_kVT_n \cong M_kVHT_n \rtimes S_n$$ in a similar manner as done in the previous subsection for $M_kVPT_n$.

Now, to determine a generating set and defining relations for $M_kVHT_n$, we apply the Reidemeister Schreier method as we did in the previous subsection. We define the following elements, expressed in terms of the generators of $M_kVT_n$, by the formulas:
\begin{align*}
    \kappa_{i,i+1}^0 &= s_i,\\
    \kappa_{i+1,1}^0 &=\rho_i \kappa_{i,i+1}^0 \rho_i = \rho_is_i\rho_i,\\
    \kappa_{i,i+1}^{\beta} &= \rho_i\rho_i^{\beta}, 1\leq \beta \leq k-1
\end{align*}
for $i=1,2,\dots,n-1$, and 
\begin{align*}
    \kappa_{j,i}^0 &= \rho_{j-1} \rho_{j-2} \dots \rho_{i+1} \kappa_{i+1,i}^0 \rho_{i+1}\dots \rho_{j-2} \rho_{j-1},\\
      \kappa_{i,j}^{\beta} &= \rho_{j-1} \rho_{j-2} \dots \rho_{i+1} \kappa_{i,i+1}^{\beta} \rho_{i+1}\dots \rho_{j-2} \rho_{j-1}
\end{align*} 
for $1\leq i<j-1 \leq n-1$ and  $\beta=1,2,\ldots, k-1.$

\begin{theorem}
    The group $M_kVHT_n$ admits a presentation with the generators \[\kappa_{i,j}^0,~ 1\leq i \neq j \leq n, \text{ and } \kappa_{i,j}^{\beta},~ 1\leq i < j \leq n, ~1\leq \beta \leq k-1,\]
    and the following defining relations:
    \begin{align}
        \kappa_{i,j}^{\alpha} \kappa_{k,l}^{\gamma} &= \kappa_{k,l}^{\gamma} \kappa_{i,j}^{\alpha},~1 \leq \alpha \leq \gamma \leq k-1,\\
        \kappa_{i,j}^{\alpha} \kappa_{i,k}^{\alpha} \kappa_{j,k}^{\alpha} &= \kappa_{j,k}^{\alpha} \kappa_{i,k}^{\alpha} \kappa_{i,j}^{\alpha},~  
       \alpha=1,2,\ldots k-1,\\
         \kappa_{i,j}^{\alpha} \kappa_{i,k}^{\beta} \kappa_{j,k}^{\beta} &= \kappa_{j,k}^{\beta} \kappa_{i,k}^{\beta} \kappa_{i,j}^{\alpha},~ 
         1 \leq \alpha < \beta \leq k-1,\\
          \kappa_{i,j}^{\alpha} \kappa_{i,k}^{\alpha} \kappa_{j,k}^{\beta} &= \kappa_{j,k}^{\beta} \kappa_{i,k}^{\alpha} \kappa_{i,j}^{\alpha},~ 
          1\leq  \alpha < \beta \leq k-1,\\
           \kappa_{i,k}^{\alpha} \kappa_{i,j}^{\alpha} \kappa_{j,k}^{0} &= \kappa_{j,k}^{0} \kappa_{i,k}^{\alpha} \kappa_{i,j}^{\alpha},~  
       \alpha=1,2,\ldots k-1,\\
          \kappa_{i,k}^{\beta} \kappa_{j,k}^{\beta} &= \kappa_{j,k}^{\beta} \kappa_{i,k}^{\beta}, ~  
       \beta=1,2,\ldots k-1,
    \end{align}
    where distinct letters $i,j,k,l$ stand for distinct indices.
\end{theorem}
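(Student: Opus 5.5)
The plan is to mirror the proof of the preceding theorem for $M_kVPT_n$, running the Reidemeister--Schreier method for the kernel of $\Psi_{n,k}$. Since $\Psi_{n,k}$ restricted to $\langle \rho_1,\dots,\rho_{n-1}\rangle$ is the same isomorphism onto $S_n$ as before, the set $\bigwedge_n$ is again a Schreier transversal for $M_kVHT_n$ in $M_kVT_n$. The map $\theta((i\ \ i+1))=\rho_i$ splits $\Psi_{n,k}$, which gives the semidirect product decomposition already noted.

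\textbf{Step 1: a conjugation lemma.} First I would establish the analogue of Lemma 3.1: for $a\in\langle\rho_1,\dots,\rho_{n-1}\rangle$ with image $\bar a$, we have $a^{-1}\kappa_{i,j}^{\alpha}a=\kappa_{(i)\bar a,(j)\bar a}^{\alpha}$. For $\beta\ge1$, $\kappa^\beta_{i,j}=\lambda^\beta_{i,j}$, so nothing new is needed. For $\alpha=0$, we have $\kappa^0_{i,i+1}=s_i$, and the proof goes through the same case analysis (a)--(f) on $a=\rho_m$. It uses relation (3.5) for far commutation and the mixed relation $\rho_i\rho_{i+1}s_i=s_{i+1}\rho_i\rho_{i+1}$ (relation (3.9) with $\alpha=0$) to move $s_i$ past adjacent $\rho$'s.

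\textbf{Step 2: generators.} Next I would compute the Schreier generators $s_{\lambda,a}=\lambda a\,\overline{\lambda a}^{-1}$:
\begin{itemize}
\item For $a=s_i$, we have $\overline{\lambda s_i}=\lambda$, so $s_{\lambda,s_i}=\lambda s_i\lambda^{-1}$. By Step 1 this is some $\kappa^0_{j,l}$ with $j\ne l$.
\item For $a=\rho_i$, the generator is trivial.
\item For $a=\rho_i^\beta$ with $\beta\ge1$, we get $\lambda\rho_i^\beta\rho_i\lambda^{-1}=(\lambda\kappa^\beta_{i,i+1}\lambda^{-1})^{-1}$, which is $\kappa^\beta_{j,l}$ up to inversion and ordering.
\end{itemize}
The ordering issue is settled exactly as in the $\lambda^\beta$ case: conjugating $\rho_i\rho_i^\beta$ by $\rho_i$ gives $\rho_i^\beta\rho_i=(\kappa^\beta_{i,i+1})^{-1}$, so only $i<j$ is needed for $\beta\ge1$.

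\textbf{Step 3: relations.} Then I would rewrite $\lambda r_\mu\lambda^{-1}$ with $\tau$ for each defining relation of $M_kVT_n$:
\begin{itemize}
\item $s_i^2=1$ becomes $(\kappa^0)^2=1$, an involution relation. Strictly it should be recorded, though the stated list seems to omit it, so I would check this carefully.
\item $s_is_j=s_js_i$ gives commutation of $\kappa^0_{i,j}$ and $\kappa^0_{k,l}$.
\item Relation (3.4) gives the far commutation (3.17).
\item Relations (3.6), (3.7), (3.8) with $\alpha,\beta\ge1$ give (3.18)--(3.20), exactly as in cases iii)--iv) of the previous proof.
\item Relation (3.7) with $\alpha=0$ gives (3.22).
\item The mixed relation (3.8) with $\alpha=0$, read as $\rho_i\rho_{i+1}\rho_i^\beta=\rho_{i+1}^\beta\rho_i\rho_{i+1}$, together with (3.9) at $\alpha\ge1$ should yield (3.21), where $\kappa^0_{j,k}$ appears between two $\kappa^\alpha$'s.
\item Relations (3.5), (3.9) with $\alpha=0$, and (3.6) with $\alpha=0$ should rewrite to trivial words after applying Step 1.
\end{itemize}

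\textbf{Main obstacle.} I expect the hardest part to be the bookkeeping in the rewriting of (3.9) for $\alpha\ge1$, which produces (3.21). There the transversal prefixes $\lambda$ change in nontrivial ways and $s_i$, $s_{i+1}$ must both be rewritten as conjugates $\kappa^0$ with the correct index pair. I would first compute $\tau$ on the word $\rho_i^\alpha\rho_{i+1}^\alpha s_i(\rho_{i+1}^\alpha)^{-1}(\rho_i^\alpha)^{-1}s_{i+1}$ explicitly for $n=3$, and only then conjugate by $\lambda\in\bigwedge_n$ using Step 1 to reach general indices $i,j,k$. A second concern is confirming that the involution relations for $\kappa^0$ are indeed forced. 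If they are, they should be added to the presentation, since they are not consequences of the listed relations.
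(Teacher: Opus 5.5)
Your plan is the argument the paper intends. The paper omits this proof, saying only that it is the Reidemeister--Schreier computation of Theorem 3.2 redone for $\Psi_{n,k}$: transversal $\bigwedge_n$, a conjugation lemma, Schreier generators, and rewriting of each defining relation.

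Your worry about the involutions is justified, and it is a defect of the statement, not of your plan. Every listed relation has the same number of $\kappa^0$-letters on both sides. So $\kappa^0_{i,j}\mapsto 1$, $\kappa^{\beta}_{i,j}\mapsto 0$ ($\beta\ge1$) defines a homomorphism from the presented group onto $\mathbb{Z}$, in which $\kappa^0_{1,2}$ has infinite order. In $M_kVHT_n$, however, $\kappa^0_{1,2}=s_1$ is a nontrivial involution; nontriviality follows from the well-defined map $M_kVT_n\to\mathbb{Z}_2$, $s_i\mapsto 1$, $\rho_i^{\alpha}\mapsto 0$.

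Similarly, sending every $\kappa^{\beta}$ with $\beta\ge1$ to $1$ maps the presented group onto the free group on the $\kappa^0_{i,j}$. Hence, for $n\ge4$, the commutations $\kappa^0_{i,j}\kappa^0_{k,l}=\kappa^0_{k,l}\kappa^0_{i,j}$ that you correctly extract from $s_is_j=s_js_i$ do not follow from the list either, since the first listed relation only allows $\alpha\ge1$. Both families have to be added.

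There is, however, a genuine slip in your Step 3. You put relation (3.5), $\rho_i^{\alpha}s_j=s_j\rho_i^{\alpha}$ for $|i-j|\ge2$, among the relations that rewrite to trivial words, but this holds only for $\alpha=0$. For $\alpha\ge1$ one has $\overline{\rho_i^{\alpha}}=\rho_i$, $s_{e,\rho_i^{\alpha}}=\rho_i^{\alpha}\rho_i=(\kappa^{\alpha}_{i,i+1})^{-1}$ and $s_{\rho_i,s_j}=\rho_is_j\rho_i=s_j$, so
\[
\tau\bigl(\rho_i^{\alpha}s_j(\rho_i^{\alpha})^{-1}s_j^{-1}\bigr)=(\kappa^{\alpha}_{i,i+1})^{-1}\,\kappa^0_{j,j+1}\,\kappa^{\alpha}_{i,i+1}\,(\kappa^0_{j,j+1})^{-1}.
\]
Conjugating by $\bigwedge_n$ gives the mixed relations $\kappa^{\alpha}_{p,q}\kappa^0_{r,s}=\kappa^0_{r,s}\kappa^{\alpha}_{p,q}$ for distinct $p,q,r,s$.

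These are not consequences of the rest. For $n\ge4$, send $\kappa^{\alpha}_{1,2}\mapsto x$, $\kappa^0_{3,4}\mapsto t$ and every other generator to $1$. This defines a homomorphism onto $\langle x,t\mid t^2=1\rangle\cong\mathbb{Z}*\mathbb{Z}_2$ that respects all listed relations, together with the involution and $\kappa^0$-commutation relations, yet $xt\neq tx$. So your computation, done correctly, yields a presentation strictly larger than the stated one: add $(\kappa^0_{i,j})^2=1$ and allow $\alpha=0$ in the first relation.

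Two smaller points.

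\begin{enumerate}
\item The fifth listed relation comes from $\rho_i^{\alpha}\rho_{i+1}^{\alpha}s_i=s_{i+1}\rho_i^{\alpha}\rho_{i+1}^{\alpha}$ ($\alpha\ge1$) alone, and the bookkeeping you expect to be hard is short. Writing $\rho_m^{\alpha}=\rho_m\kappa^{\alpha}_{m,m+1}$, for $n=3$ the two sides become $\rho_1\rho_2\,\kappa^{\alpha}_{1,3}\kappa^{\alpha}_{2,3}\kappa^0_{1,2}$ and $\rho_1\rho_2\,\kappa^0_{1,2}\kappa^{\alpha}_{1,3}\kappa^{\alpha}_{2,3}$. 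By contrast, $\rho_i\rho_{i+1}\rho_i^{\beta}=\rho_{i+1}^{\beta}\rho_i\rho_{i+1}$ rewrites trivially: both sides equal $\rho_i\rho_{i+1}\rho_i\,\kappa^{\beta}_{i,i+1}$.
\item Transporting these relations with your Step 1 requires the convention $\kappa^{\beta}_{j,i}:=(\kappa^{\beta}_{i,j})^{-1}$, and the resulting relations hold for every ordering of the indices. The statement, which lists generators $\kappa^{\beta}_{i,j}$ only for $i<j$, leaves this implicit.
\end{enumerate}
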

\begin{proof}
The proof is similar to that of Theorem 3.2 and is therefore omitted.
\end{proof}
 
\section{Complex Homogeneous Local Representations of $M_kVT_n$}
In this section, our aim is to construct and study matrix representations of the multi-virtual twin group \(M_kVT_n\). We classify and study all complex homogeneous \(2\)-local representations of \(M_kVT_n\) for \(k \geq 1\) and \(n \geq 3\).
\begin{theorem} \label{thm 41}
    Let $\zeta: M_kVT_n \longrightarrow \mathrm{GL}_{n}(\mathbb{C})$ be a complex homogeneous 2-local representation of $M_kVT_n$ for  \(k \geq 1\) and \(n \geq 3\). Then, $\zeta$ is equivalent to one of the following eight representations.
    \begin{itemize}
    \item[(1)] $\zeta_1: M_kVT_n \longrightarrow \mathrm{GL}_n(\mathbb{C})$ such that
$$\zeta_1(s_i)=\zeta_1(\rho_i^{\alpha})= I_n$$
    for $1\leq i \leq n-1, 0 \leq \alpha \leq k-1.$ 
\item[(2)] $\zeta_2: M_kVT_n \longrightarrow \mathrm{GL}_n(\mathbb{C})$ such that
{\small
$$\zeta_2(s_i)=\left( \begin{array}{c|@{}c|c@{}}
   \begin{matrix}
     I_{i-1} 
   \end{matrix} 
      & \textbf{0} & \textbf{0} \\
      \hline
    \textbf{0} &\hspace{0.2cm} \begin{matrix}
   		1 & 0\\
   		0 & 1\\
   		\end{matrix}  & \textbf{0}  \\
\hline
\textbf{0} & \textbf{0} & I_{n-i-1}
\end{array} \right) \text{ and } \zeta_2(\rho_i^{\alpha})=\left( \begin{array}{c|@{}c|c@{}}
   \begin{matrix}
     I_{i-1} 
   \end{matrix} 
      & \textbf{0} & \textbf{0} \\
      \hline
    \textbf{0} &\hspace{0.2cm} \begin{matrix}
   		0 & \dfrac{1}{y_{\alpha}}\\
   		y_{\alpha} & 0\\
   		\end{matrix}  & \textbf{0}  \\
\hline
\textbf{0} & \textbf{0} & I_{n-i-1}
\end{array} \right)$$
}
for $1\leq i \leq n-1, 0 \leq \alpha \leq k-1, \text{ where } y_{\alpha}\in \mathbb{C}^*.$
\item[(3)] $\zeta_3: M_kVT_n \longrightarrow \mathrm{GL}_n(\mathbb{C})$ such that
{\small
$$\zeta_3(s_i)=\left( \begin{array}{c|@{}c|c@{}}
   \begin{matrix}
     I_{i-1} 
   \end{matrix} 
      & \textbf{0} & \textbf{0} \\
      \hline
    \textbf{0} &\hspace{0.2cm} \begin{matrix}
   		1 & 0\\
   		0 & -1\\
   		\end{matrix}  & \textbf{0}  \\
\hline
\textbf{0} & \textbf{0} & I_{n-i-1}
\end{array} \right) \text{ and } \zeta_3(\rho_i^{\alpha})=\left( \begin{array}{c|@{}c|c@{}}
   \begin{matrix}
     I_{i-1} 
   \end{matrix} 
      & \textbf{0} & \textbf{0} \\
      \hline
    \textbf{0} &\hspace{0.2cm} \begin{matrix}
   		0 & \dfrac{1}{y_{\alpha}}\\
   		y_{\alpha} & 0\\
   		\end{matrix}  & \textbf{0}  \\
\hline
\textbf{0} & \textbf{0} & I_{n-i-1}
\end{array} \right)$$
}
$\text{ for } 1\leq i \leq n-1, 0 \leq \alpha \leq k-1, \text{ where } y_{\alpha}\in \mathbb{C}^*.$
\item[(4)] $\zeta_4: M_kVT_n \longrightarrow \mathrm{GL}_n(\mathbb{C})$ such that
{\small
$$\zeta_4(s_i)=\left( \begin{array}{c|@{}c|c@{}}
   \begin{matrix}
     I_{i-1} 
   \end{matrix} 
      & \textbf{0} & \textbf{0} \\
      \hline
    \textbf{0} &\hspace{0.2cm} \begin{matrix}
   		-1 & 0\\
   		0 & 1\\
   		\end{matrix}  & \textbf{0}  \\
\hline
\textbf{0} & \textbf{0} & I_{n-i-1}
\end{array} \right) \text{ and } \zeta_4(\rho_i^{\alpha})=\left( \begin{array}{c|@{}c|c@{}}
   \begin{matrix}
     I_{i-1} 
   \end{matrix} 
      & \textbf{0} & \textbf{0} \\
      \hline
    \textbf{0} &\hspace{0.2cm} \begin{matrix}
   		0 & \dfrac{1}{y_{\alpha}}\\
   		y_{\alpha} & 0\\
   		\end{matrix}  & \textbf{0}  \\
\hline
\textbf{0} & \textbf{0} & I_{n-i-1}
\end{array} \right)$$
}
$\text{ for } 1\leq i \leq n-1, 0 \leq \alpha \leq k-1, \text{ where } y_{\alpha}\in \mathbb{C}^*.$
\item[(5)] $\zeta_5: M_kVT_n \longrightarrow \mathrm{GL}_n(\mathbb{C})$ such that
{\small
$$\zeta_5(s_i)=\left( \begin{array}{c|@{}c|c@{}}
   \begin{matrix}
     I_{i-1} 
   \end{matrix} 
      & \textbf{0} & \textbf{0} \\
      \hline
    \textbf{0} &\hspace{0.2cm} \begin{matrix}
   		-1 & 0\\
   		0 & -1\\
   		\end{matrix}  & \textbf{0}  \\
\hline
\textbf{0} & \textbf{0} & I_{n-i-1}
\end{array} \right) \text{ and } \zeta_5(\rho_i^{\alpha})=\left( \begin{array}{c|@{}c|c@{}}
   \begin{matrix}
     I_{i-1} 
   \end{matrix} 
      & \textbf{0} & \textbf{0} \\
      \hline
    \textbf{0} &\hspace{0.2cm} \begin{matrix}
   		0 & \dfrac{1}{y_{\alpha}}\\
   		y_{\alpha} & 0\\
   		\end{matrix}  & \textbf{0}  \\
\hline
\textbf{0} & \textbf{0} & I_{n-i-1}
\end{array} \right)$$
}
$\text{ for } 1\leq i \leq n-1, 0 \leq \alpha \leq k-1, \text{ where } y_{\alpha}\in \mathbb{C}^*.$
\item[(6)] $\zeta_6: M_kVT_n \longrightarrow \mathrm{GL}_n(\mathbb{C})$ such that
{\small
$$\zeta_6(s_i)=\left( \begin{array}{c|@{}c|c@{}}
   \begin{matrix}
     I_{i-1} 
   \end{matrix} 
      & \textbf{0} & \textbf{0} \\
      \hline
    \textbf{0} &\hspace{0.2cm} \begin{matrix}
   		1 & z\\
   		0 & -1\\
   		\end{matrix}  & \textbf{0}  \\
\hline
\textbf{0} & \textbf{0} & I_{n-i-1}
\end{array} \right) \text{ and } \zeta_6(\rho_i^{\alpha})=\left( \begin{array}{c|@{}c|c@{}}
   \begin{matrix}
     I_{i-1} 
   \end{matrix} 
      & \textbf{0} & \textbf{0} \\
      \hline
    \textbf{0} &\hspace{0.2cm} \begin{matrix}
   		0 & \dfrac{1}{y_{\alpha}}\\
   		y_{\alpha} & 0\\
   		\end{matrix}  & \textbf{0}  \\
\hline
\textbf{0} & \textbf{0} & I_{n-i-1}
\end{array} \right)$$
}
$\text{ for } 1\leq i \leq n-1, 0 \leq \alpha \leq k-1, \text{ where } y_{\alpha}, z \in \mathbb{C}^*.$
\item[(7)] $\zeta_7: M_kVT_n \longrightarrow \mathrm{GL}_n(\mathbb{C})$ such that
{\small
$$\zeta_7(s_i)=\left( \begin{array}{c|@{}c|c@{}}
   \begin{matrix}
     I_{i-1} 
   \end{matrix} 
      & \textbf{0} & \textbf{0} \\
      \hline
    \textbf{0} &\hspace{0.2cm} \begin{matrix}
   		-1 & z\\
   		0 & 1\\
   		\end{matrix}  & \textbf{0}  \\
\hline
\textbf{0} & \textbf{0} & I_{n-i-1}
\end{array} \right) \text{ and } \zeta_7(\rho_i^{\alpha})=\left( \begin{array}{c|@{}c|c@{}}
   \begin{matrix}
     I_{i-1} 
   \end{matrix} 
      & \textbf{0} & \textbf{0} \\
      \hline
    \textbf{0} &\hspace{0.2cm} \begin{matrix}
   		0 & \dfrac{1}{y_{\alpha}}\\
   		y_{\alpha} & 0\\
   		\end{matrix}  & \textbf{0}  \\
\hline
\textbf{0} & \textbf{0} & I_{n-i-1}
\end{array} \right)$$
}
$\text{ for } 1\leq i \leq n-1, 0 \leq \alpha \leq k-1, \text{ where } y_{\alpha}, z \in \mathbb{C}^*.$
\item[(8)] $\zeta_8: M_kVT_n \longrightarrow \mathrm{GL}_{n}(\mathbb{C})$ such that
{\small
\begin{align*}
    \zeta_8(s_i) &= \left( \begin{array}{c|@{}c|c@{}}
   \begin{matrix}
     I_{i-1} 
   \end{matrix} 
      & \textbf{0} & \textbf{0} \\
      \hline
    \textbf{0} &\hspace{0.2cm} \begin{matrix}
   		-a & -\dfrac{(a^2-1)}{b}\\
   		b & a\\
   		\end{matrix}  & \textbf{0}  \\
\hline
\textbf{0} & \textbf{0} & I_{n-i-1}
\end{array} \right) \text{ and },\\
\zeta_8(\rho_i^{\alpha}) &= \left( \begin{array}{c|@{}c|c@{}}
   \begin{matrix}
     I_{i-1} 
   \end{matrix} 
      & \textbf{0} & \textbf{0} \\
      \hline
    \textbf{0} &\hspace{0.2cm} \begin{matrix}
   		0 & \dfrac{1}{y_{\alpha}}\\
   		y_{\alpha} & 0\\
   		\end{matrix}  & \textbf{0}  \\
\hline
\textbf{0} & \textbf{0} & I_{n-i-1}
\end{array} \right)
\end{align*}
}
$\text{ for } 1\leq i \leq n-1, 0 \leq \alpha \leq k-1, \text{ where } a\in \mathbb{C}$ and $b, y_{\alpha}\in \mathbb{C}^*.$
\end{itemize}

\end{theorem}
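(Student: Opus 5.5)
Write $\zeta(s_i)$ as the $2$-local matrix with block $S=\begin{pmatrix} a & b\\ c & d\end{pmatrix}$ in rows and columns $i,i+1$. For each $\alpha$, write $\zeta(\rho_i^{\alpha})$ as the $2$-local matrix with block $R_\alpha=\begin{pmatrix} p_\alpha & q_\alpha\\ r_\alpha & t_\alpha\end{pmatrix}$. Homogeneity means these blocks do not depend on $i$. The plan is to impose the defining relations (3.1)--(3.9) of $M_kVT_n$ as polynomial equations in these entries, solve the resulting system, and then reduce the solutions to the eight listed normal forms by conjugating with a suitable diagonal matrix. The far-commutation relations (3.2), (3.4), (3.5) hold automatically for $2$-local matrices with $|i-j|\ge 2$, so only the relations (3.1), (3.3), (3.6)--(3.9) need to be imposed. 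Since $n\ge 3$, it is enough to check these relations on $i=1,2$, that is, as identities between $3\times 3$ matrices embedded in $\mathrm{GL}_n(\mathbb{C})$.

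\textbf{Step 1: the virtual generators alone.} Fix $\alpha$ and solve $R_\alpha^2=I$ together with the braid relation (3.6) in its $3\times 3$ form. This is the known computation for homogeneous $2$-local representations of $S_n$ (equivalently of $VT_n$ restricted to the $\rho_i$). Comparing the corner entries $(1,3)$ and $(3,1)$ of the two sides forces $q_\alpha r_\alpha\cdot(\ldots)=0$. The diagonal possibilities ($R_\alpha=I_2$ or $R_\alpha=\pm$ a diagonal involution) fail the braid relation unless $R_\alpha=I_2$. I expect the remaining possibility to be $p_\alpha=t_\alpha=0$, $q_\alpha r_\alpha=1$, so that $R_\alpha=\begin{pmatrix}0&1/y_\alpha\\ y_\alpha&0\end{pmatrix}$. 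Next impose the mixed relations (3.7) and (3.8) for $\alpha<\beta$. These should rule out mixing the identity block for some $\alpha$ with the anti-diagonal block for other $\alpha$, except in the fully trivial case. One point needs checking here: whether $R_\alpha=I_2$ for all $\alpha$ is compatible with a nontrivial $S$. I expect (3.9) to collapse such $S$ to $S=I$, giving $\zeta_1$; any further degenerate branch will be handled in the same way.

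\textbf{Step 2: the twin generator.} With $R_\alpha$ anti-diagonal, impose $S^2=I$ and the mixed relation (3.9), $\rho_i^\alpha\rho_{i+1}^\alpha s_i=s_{i+1}\rho_i^\alpha\rho_{i+1}^\alpha$, as an equality of $3\times 3$ matrices. Multiplying out, $\rho_1^\alpha\rho_2^\alpha$ is a monomial (permutation-times-diagonal) matrix involving $y_\alpha$. The relation (3.9) then says that conjugating $S\oplus 1$ by this monomial matrix gives $1\oplus S$. Comparing entries, the entries of $S$ lying outside the two overlapping positions must match trivially. I expect this to force $c$ and $b$ to satisfy only constraints compatible with $S^2=I$, possibly with some factor of $y_\alpha$ that has to cancel. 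This entry comparison is the main obstacle, because the $y_\alpha$ factors and the requirement that the result hold for every $\alpha$ simultaneously must be tracked carefully. The first thing I would try is to write the conjugation explicitly and read off entry-by-entry equalities such as $b\,y_\alpha^{\pm1}=\ldots$.

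\textbf{Step 3: solving $S^2=I$ and normalizing.} Once (3.9) is reduced, solve $S^2=I$, which splits into the following cases.
\begin{itemize}
\item[(i)] $S=\pm I$, giving $\zeta_2$ and $\zeta_5$.
\item[(ii)] $S$ diagonal with $\operatorname{diag}(1,-1)$ or $\operatorname{diag}(-1,1)$, giving $\zeta_3$ and $\zeta_4$.
\item[(iii)] $c=0$, $b\ne0$, $a=-d=\pm1$, giving $\zeta_6$ and $\zeta_7$.
\item[(iv)] $c\ne 0$, $d=-a$, $b=-(a^2-1)/c$, giving $\zeta_8$.
\end{itemize}
Finally, check directly that each of the eight families satisfies every relation, and note that equivalence here is up to conjugation by a diagonal matrix that preserves the $2$-local homogeneous form. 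This diagonal conjugation is what allows an upper-triangular form with $b\neq 0$ to appear instead of a lower-triangular one.
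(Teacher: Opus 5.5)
Your route is the same as the paper's: write generic $2\times 2$ blocks, impose (3.1), (3.3), (3.6)--(3.9) at $i=1,2$, and solve. Staging it through the virtual generators first, then the mixed relations, then the $s$-block is more informative than the paper's bare assertion that the system has eight solutions, and the plan does produce exactly the stated list. Several of your specific claims are wrong, though, and should be fixed.

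\textbf{Step 1.} The corner entries give no information. In $\rho_1\rho_2\rho_1=\rho_2\rho_1\rho_2$ the $(1,3)$ and $(3,1)$ entries are $q_\alpha^2$ and $r_\alpha^2$ on both sides.

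The useful entries are $(1,1)$, $(2,2)$ and $(1,2)$. Dropping the index, they give $p(p+qr-1)=0$, $pt(t-p)=0$ and $pqt=0$.
\begin{itemize}
\item Combined with $p^2+qr=1$, the first equation gives $p\in\{0,1\}$.
\item If $p=1$, then $qr=0$ and $t^2=1$; the second equation gives $t=1$, and $q(p+t)=0$, $r(p+t)=0$ then give $q=r=0$.
\item If $p=0$, then $qr=1$ and $t=0$.
\end{itemize}
This argument is needed to exclude non-diagonal involutions such as $\begin{pmatrix}1&q\\0&-1\end{pmatrix}$, which your check of diagonal possibilities does not cover.

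You should also record that when all $R_\alpha$ are anti-diagonal, (3.7) and (3.8) hold for arbitrary, independent $y_\alpha,y_\beta$. This is a short monomial-matrix check, and it is what justifies the independent parameters $y_\alpha$ in the statement.

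\textbf{Step 2} is not an obstacle: (3.9) imposes no condition on $S$ at all. With $R_\alpha$ anti-diagonal, $\zeta(\rho_1^\alpha\rho_2^\alpha)$ acts by
\[
e_1\mapsto y_\alpha e_2,\qquad e_2\mapsto y_\alpha e_3,\qquad e_3\mapsto y_\alpha^{-2}e_1.
\]
Equivalently it equals $D\Pi D^{-1}$, where $D=\mathrm{diag}(1,y_\alpha,y_\alpha^2)$ and $\Pi$ is the $3$-cycle permutation matrix. A direct check shows that it conjugates $S\oplus 1$ to $1\oplus S$ for every $S$, because the $y_\alpha$ factors cancel. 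So $S^2=I$ is the only constraint on $S$, which is why every involution appears in the list. For the necessity direction you never need (3.9) in this branch; it matters only when verifying that the listed maps really are representations.

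\textbf{Final remark.} No equivalence is used anywhere: every solution of the system is literally one of $\zeta_1,\dots,\zeta_8$. Your closing explanation is also wrong. Conjugating by $\mathrm{diag}(1,\lambda,\dots,\lambda^{n-1})$ only rescales the off-diagonal block entries by $\lambda^{\pm1}$, so it cannot turn a lower-triangular block into an upper-triangular one. The lower-triangular involutions ($b=0$, $c\neq 0$) already lie in family (8), as the members with $a^2=1$. Your split into $c=0$ versus $c\neq 0$ already covers all cases without any conjugation.
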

\begin{proof}
    Let $\zeta$ be a complex homogeneous $2$-local  representation of $M_kVT_n$ for  \(k \geq 1\) and \(n \geq 3\). As $M_kVT_n$ has $k+1$ families of generators, which are $s_1,s_2,\dots,s_{n-1},\rho_1^{\alpha},\rho_2^{\alpha},\dots,\rho_{n-1}^{\alpha}$ for $\alpha=0,1,\dots,k-1,$ the images of the generators of $M_kVT_n$ can be defined as follows.
    For $ 1\leq i \leq n-1$ and $0 \leq \alpha \leq k-1,$ we set
    {\small
    $$\zeta(s_i)=\left( \begin{array}{c|@{}c|c@{}}
   \begin{matrix}
     I_{i-1} 
   \end{matrix} 
      & \textbf{0} & \textbf{0} \\
      \hline
    \textbf{0} &\hspace{0.2cm} \begin{matrix}
   		a & b\\
   		c & d\\
   		\end{matrix}  & \textbf{0}  \\
\hline
\textbf{0} & \textbf{0} & I_{n-i-1}
\end{array} \right) \text{ and } 
\zeta(\rho_i^{\alpha})=\left( \begin{array}{c|@{}c|c@{}}
   \begin{matrix}
     I_{i-1} 
   \end{matrix} 
      & \textbf{0} & \textbf{0} \\
      \hline
    \textbf{0} &\hspace{0.2cm} \begin{matrix}
   		w_{\alpha} & x_{\alpha}\\
   		y_{\alpha} & z_{\alpha}\\
   		\end{matrix}  & \textbf{0}  \\
\hline
\textbf{0} & \textbf{0} & I_{n-i-1}
\end{array} \right),$$
}
where $a,b,c,d,w_{\alpha},x_{\alpha},y_{\alpha},z_{\alpha} \in \mathbb{C},~ ad-bc \neq 0,w_{\alpha}z_{\alpha}-z_{\alpha}y_{\alpha} \neq 0.$ Remark that we only need to consider the following relations among the generators of $M_kVT_n$ and the remaining relations yield similar equations:
\begin{align*}
    s_1^2 &= 1,\\
    (\rho_1^\alpha )^2 &= 1 \  \text{ for } \alpha=0,1,\dots,k-1,\\
    \rho_1^{\alpha} \rho_{2}^{\alpha} \rho_1^{\alpha} &= \rho_{2}^{\alpha} \rho_1^{\alpha} \rho_{2}^{\alpha} \ \text{ for } \alpha=0,1,\dots,k-1,\\
    \rho_1^{\alpha} \rho_{2}^{\beta} \rho_1^{\beta} &= \rho_{2}^{\beta} \rho_1^{\beta} \rho_{2}^{\alpha} \ \text{ for } 0 \leq \alpha < \beta \leq k-1,   \\
    \rho_1^{\alpha} \rho_{2}^{\alpha} \rho_1^{\beta} &= \rho_{2}^{\beta} \rho_1^{\alpha} \rho_{2}^{\alpha} \  \text{ for } 
    0 \leq \alpha < \beta \leq k-1, \\
     \rho_1^{\alpha} \rho_{2}^{\alpha} s_1 &= s_{2} \rho_1^{\alpha} \rho_{2}^{\alpha} \ \text{ for } \alpha=0,1,\dots,k-1.
\end{align*}

Now, we substitute the images of the generators under the representations $\zeta$ into above relations. This substitution produces a system of $(k+1)^2$ equations involving $4(k+1)$ unknowns. The resulting system of equations are given in the following table:
\begin{tcolorbox}[
  title=System of Equations for {\(\displaystyle \beta = 0,1,2,\dots,k-1, \alpha= 0,1,\dots,k-2 ~\& ~ \alpha < \beta \)},,
  colframe=black,
  colback=white,
  sharp corners,
  fonttitle=\bfseries,
]
\setlength{\columnseprule}{0.4pt}
\vspace{-0.6cm}
\begin{multicols}{2}
{
\begin{align*}
a^2 + bc &= 1,\\
ab + bd &= 0,\\
ac + cd &= 0,\\
d^2 + bc &= 1,
\end{align*}
\vspace{-0.2cm}}
$\begin{array}{c}
\rule{.9\linewidth}{0.4pt}
\end{array}$
{
\begin{align*}
w_{\beta}^2 + x_{\beta} y_{\beta} w_{\beta} &= w_{\beta},\\
w_{\beta} x_{\beta} + w_{\beta} x_{\beta} z_{\beta} &= w_{\beta} x_{\beta},\\
w_{\beta} y_{\beta} + w_{\beta} y_{\beta} z_{\beta} &= w_{\beta} y_{\beta},\\
w_{\beta} z_{\beta}^2 + x_{\beta} y_{\beta} &= z_{\beta} w_{\beta}^2 + x_{\beta} y_{\beta},\\
x_{\beta} z_{\beta} + w_{\beta} x_{\beta} z_{\beta} &= x_{\beta} z_{\beta},\\
y_{\beta} z_{\beta} + w_{\beta} y_{\beta} z_{\beta} &= y_{\beta} z_{\beta},\\
z_{\beta}^2 + x_{\beta} y_{\beta} z_{\beta} &= z_{\beta},
\end{align*}
\vspace{-0.2cm}}
$\begin{array}{c}
\rule{.9\linewidth}{0.4pt}
\end{array}$
{
\begin{align*}
w_{\beta} w_{\alpha} + w_{\beta} y_{\beta} x_{\alpha} &= w_{\beta},\\
x_{\beta} w_{\alpha} + w_{\beta} y_{\beta} x_{\alpha} &= x_{\beta} w_{\alpha},\\
w_{\beta} y_{\alpha} + w_{\beta} y_{\beta} z_{\alpha} &= w_{\beta} y_i,\\
x_{\beta} y_{\alpha} + w_{\beta} z_{\beta} z_{\alpha} &= x_{\beta} y_{\alpha} + w_{\beta} z_{\beta} w_{\alpha},\\
x_{\beta} z_{\alpha} + w_{\beta} z_{\beta} x_{\alpha} &= x_{\beta} z_{\alpha},\\
z_{\beta} y_{\alpha} + y_{\beta} z_{\beta} x_{\alpha} &= y_{\beta} z_{\beta},\\
z_{\beta} z_{\alpha} + y_{\beta} z_{\beta} x_{\alpha} &= z_{\beta},
\end{align*}
\vspace{-0.2cm}}

\columnbreak

{
\begin{align*}
w_{\beta}^2 + x_{\beta} y_{\beta} &= 1,\\
w_{\beta} x_{\beta} + x_{\beta} z_{\beta} &= 0,\\
w_{\beta} y_{\beta} + y_{\beta} z_{\beta} &= 0,\\
z_{\beta}^2 + x_{\beta} y_{\beta} &= 1,
\end{align*}
\vspace{-0.2cm}}
$\begin{array}{c}
\rule{.9\linewidth}{0.4pt}
\end{array}$
{
\begin{align*}
~~w_{\beta} w_{\alpha} + y_{\beta} w_{\alpha} x_{\alpha} &= w_{\alpha},\\
x_{\beta} w_{\alpha} + z_{\beta} w_{\alpha} x_{\alpha} &= w_{\alpha} x_{\alpha},\\
w_{\beta} y_{\alpha} + y_{\beta} w_{\alpha} z_{\alpha} &= w_{\beta} y_{\alpha},\\
x_{\beta} y_{\alpha} + z_{\beta} w_{\alpha} z_{\alpha} &= x_{\beta} y_{\alpha} + w_{\beta} w_{\alpha} z_{\alpha},\\
x_{\beta} z_{\alpha} + w_{\beta} x_{\alpha} z_{\alpha} &= z_{\beta} y_{\alpha},\\
z_{\beta} z_{\alpha} + y_{\beta} x_{\alpha} z_{\alpha} &= z_{\alpha},
\end{align*}
\vspace{-0.2cm}}
$\begin{array}{c}
\rule{.9\linewidth}{0.4pt}
\end{array}$
{
\begin{align*}
a w_{\beta} + c w_{\beta} x_{\beta} &= w_{\beta},\\
b w_{\beta} + d w_{\beta} x_{\beta} &= w_{\beta} x_{\beta},\\
a y_{\beta} + c w_{\beta} z_{\beta} &= a y_{\beta},\\
b y_{\beta} + d w_{\beta} z_{\beta} &= b y_{\beta} + a w_{\beta} z_{\beta},\\
b z_{\beta} + a x_{\beta} z_{\beta} &= x_{\beta} z_{\beta},\\
d y_{\beta} + c w_{\beta} z_{\beta} &= d y_{\beta},\\
d z_{\beta} + c x_{\beta} z_{\beta} &= z_{\beta}.
\end{align*}
\vspace{-0.2cm}}
\end{multicols}
\end{tcolorbox}
\noindent Mathematical techniques of this system show that it admits exactly eight distinct solutions, each of which yields a complex homogeneous \(2\)-local representation of \(M_kVT_n\). Therefore, up to equivalence, there are precisely eight such representations of \(M_kVT_n\), as listed in (1)–(8), and this completes the proof.\end{proof}

In what follows, we investigate the faithfulness of the classified complex homogeneous \(2\)-local representations of \(M_kVT_n\).

\begin{theorem}
Let $
\zeta : M_kVT_n \longrightarrow \mathrm{GL}_n(\mathbb{C})$
be a complex homogeneous \(2\)-local representation of \(M_kVT_n\) with $k \geq 1$ and \(n \ge 3\).
By Theorem~\ref{thm 41}, the representation $\zeta$ is equivalent to one of the eight
representations $\zeta_i$, $1 \le i \le 8$, described therein.
Then the following statements hold.
\begin{itemize}
    \item[(a)] If \(\zeta\) is equivalent to one of $\zeta_1, \zeta_2,\zeta_3,\zeta_4,\zeta_5,\zeta_6$  or $\zeta_7$, then \(\zeta\) is  unfaithful.
    \item[(b)] If \(\zeta\) is equivalent to $\zeta_8$ and $a\in \{-1,0,1\}$, then \(\zeta\) is unfaithful.
    \end{itemize}
\end{theorem}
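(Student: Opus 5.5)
Since equivalent representations have the same kernel, it suffices to exhibit, for each $\zeta_i$ in (a) and for $\zeta_8$ with $a\in\{-1,0,1\}$ in (b), a nontrivial element of $M_kVT_n$ mapped to $I_n$. The nontriviality of the chosen word will be checked with a homomorphism onto a simpler group, for instance the abelianization (where each generator has its own $\mathbb{Z}_2$ factor) or the map $M_kVT_n\to T_n$ that sends every $\rho_i^\alpha$ to $1$; we still need to confirm that this map respects all defining relations.

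For $\zeta_1$ the kernel is the whole group, so any $s_1\neq 1$ works. For $\zeta_2$, $\zeta_2(s_1)=I_n$, and $s_1\neq 1$ because it maps to a nontrivial element of $T_n$. For $\zeta_3,\dots,\zeta_7$, each $\zeta(s_i)$ is an involution, and its $2\times 2$ block is diagonal or upper triangular with diagonal entries $\pm1$. We take $g=(s_1s_3)^2$ when $n\ge 4$, but this is trivial in the group since $s_1,s_3$ commute. A better choice is $g=(s_1s_2)^m$ for suitable $m$, which is nontrivial because $\langle s_1,s_2\rangle\cong\mathbb{Z}_2*\mathbb{Z}_2$ inside $T_n$ and $s_1s_2$ has infinite order. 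We therefore compute the $3\times3$ block of $\zeta(s_1)\zeta(s_2)$ in each case. For $\zeta_3,\zeta_4,\zeta_5$ the matrices are diagonal $\pm1$, so $(s_1s_2)^2\mapsto I_n$. For $\zeta_6$ the relevant block is
\[
A=\begin{pmatrix}1&z&0\\0&-1&0\\0&0&1\end{pmatrix},\qquad B=\begin{pmatrix}1&0&0\\0&1&z\\0&0&-1\end{pmatrix}.
\]
This product may be unipotent-like and of infinite order, so $(s_1s_2)^m$ could fail. In that case we instead use mixed words. One option is the commutator $[s_1,\rho_1^{0}s_1\rho_1^{0}]$, whose images are all upper/lower triangular conjugates. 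Another is $[\,s_1,\, s_2 s_1 s_2\,]$-type words, or elements such as $(s_1\rho_1^\alpha)^{m}$: $\zeta(\rho_1^\alpha)$ is a weighted swap, so $\zeta(s_1\rho_1^\alpha)$ is an explicit $2\times2$ matrix whose order we can read off from its trace and determinant. For $\zeta_6,\zeta_7$ this matrix has determinant $1$ and trace $zy_\alpha$ type, so it need not have finite order. The most robust plan is to look for a $k$-independent word in the kernel via the $\rho^\alpha$ family. Since $\zeta(\rho_i^\alpha)$ and $\zeta(\rho_i^\beta)$ differ only by a diagonal conjugation, we take $g=(\rho_1^{0}\rho_1^{1})$ when $k\ge2$, and for $k=1$ the element $(\rho_1\rho_2)^3$, which is trivial, so that fails. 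Instead, for $k=1$ we use $s_1\rho_2 s_1\rho_2\, \cdot$ (a far-commutation-free word) and verify by direct computation that its image commutes appropriately.

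For (b), the block $M=\begin{pmatrix}-a&-(a^2-1)/b\\ b&a\end{pmatrix}$ has eigenvalues $\pm1$, and we compute $\zeta_8(s_1s_2)$ on the $3\times3$ block, whose characteristic polynomial depends only on $a$. For $a=0$ it is $x^3+x^2\ldots$, and we expect a finite order: order $3$ for $a=0$ and order $2$ (or unipotent adjustments) for $a=\pm1$. Since $s_1s_2$ has infinite order in $T_n$, a suitable power then lies in the kernel. The main obstacle is identifying, for $\zeta_6,\zeta_7$ and for $a=\pm1$ in $\zeta_8$ (where Jordan blocks may appear), a concrete nontrivial kernel word. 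There I would first try the orders of $\zeta(s_1s_2)$ and $\zeta(s_1\rho_2)$ via their characteristic polynomials, and otherwise fall back on commutators of powers, which kill unipotent parts: $[X^2,Y^2]$ with $X^2,Y^2$ commuting diagonal-plus-unipotent matrices.
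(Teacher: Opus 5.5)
Your kernel witnesses for $\zeta_1,\dots,\zeta_5$ ($s_1$, and $(s_1s_2)^2$) are the paper's. The genuine gap is that for $\zeta_6$, $\zeta_7$, and $\zeta_8$ with $a=\pm1$ you never produce an element of the kernel, and several of your candidates fail:
\begin{itemize}
\item The word $\rho_1^0\rho_1^1$ is sent to a matrix with active block $\mathrm{diag}(y_1/y_0,\,y_0/y_1)$, which is not the identity unless $y_0=y_1$.
\item The same-index word $s_1\rho_1^\alpha$ has determinant $1$ and trace $zy_\alpha$, so it generically has infinite order.
\item The $k=1$ fallback is left unfinished.
\item For $a=\pm1$, $\zeta_8(s_1s_2)$ has characteristic polynomial $(x-1)(x+1)^2$ and a nontrivial Jordan block at $-1$ (since $b\neq 0$). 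So it has infinite order, not order $2$.
\end{itemize}

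The missing idea is to pair $s$ and $\rho$ at \emph{adjacent} positions. On the active $3\times 3$ block:
\begin{itemize}
\item $\zeta_8(s_i\rho_{i+1}^\alpha)$ has characteristic polynomial $(x-1)(x^2+(1+a)x+1)$;
\item $\zeta_8(s_{i+1}\rho_i^\alpha)$ has $(x-1)(x^2+(1-a)x+1)$;
\item $\zeta_6(s_i\rho_{i+1}^\alpha)$ and $\zeta_7(s_{i+1}\rho_i^\alpha)$ both have $(x-1)(x^2+1)$.
\end{itemize}
When the three roots are distinct, the matrix is diagonalizable of finite order. This gives $(s_i\rho_{i+1}^\alpha)^4\in\ker\zeta_6$ and $(s_{i+1}\rho_i^\alpha)^4\in\ker\zeta_7$. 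For $\zeta_8$ it gives $(s_i\rho_{i+1}^\alpha)^4$ if $a=-1$, $(s_i\rho_{i+1}^\alpha)^3$ if $a=0$, and $(s_{i+1}\rho_i^\alpha)^4$ if $a=1$. These are exactly the paper's witnesses. Your guess for $a=0$ does work by a different route: $\zeta_8(s_1s_2)$ has characteristic polynomial $(x-1)(x^2+(a^2+1)x+1)$, which is $x^3-1$ at $a=0$, so $(s_1s_2)^3\in\ker\zeta_8$. The polynomial is not ``$x^3+x^2\ldots$'' as you wrote, and this route cannot handle $a=\pm1$.

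Your nontriviality mechanism also fails as stated. Sending every $\rho_i^\alpha$ to $1$ is not a homomorphism $M_kVT_n\to T_n$: the mixed relation $\rho_i^\alpha\rho_{i+1}^\alpha s_i=s_{i+1}\rho_i^\alpha\rho_{i+1}^\alpha$ would force $s_i=s_{i+1}$. For the same reason the abelianization is $\mathbb{Z}_2^{k+1}$, since all $s_i$ are identified and, for each fixed $\alpha$, all $\rho_i^\alpha$ are identified; it is not one factor per generator. The abelianization still detects $s_1\neq 1$, but it kills $(s_1s_2)^m$ and $(s_i\rho_{i+1}^\alpha)^4$, so you need other witnesses. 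The list itself supplies them:
\begin{itemize}
\item $\zeta_6((s_1s_2)^2)=I_n+N$ with $N\neq 0$ and $N^2=0$, so $s_1s_2$ has infinite order in $M_kVT_n$.
\item $\zeta_7(s_1\rho_2^\alpha)$ and $\zeta_6(s_2\rho_1^\alpha)$ each have a nontrivial Jordan block at $-1$, so every positive power of $s_1\rho_2^\alpha$ and of $s_2\rho_1^\alpha$ is nontrivial.
\end{itemize}
The paper only asserts nontriviality, so including this check would actually strengthen its argument.
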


\begin{proof}
We deal with each case separately.
\begin{itemize}
\item[(a)] Suppose that $\zeta$ is equivalent to one of $\zeta_1, \zeta_2,\zeta_3,\zeta_4,\zeta_5,\zeta_6$  or $\zeta_7$. 
\begin{itemize}
    \item[(i)] If $\zeta \simeq \zeta_1$ or $\zeta_2$, then clearly $\zeta$ is unfaithful as the nontrivial elements $s_i$ of $M_kVT_n$ belong to $\ker{\zeta}$ for all $1\leq i \leq n-1$.
    \item[(ii)] If $\zeta \simeq \zeta_3,\zeta_4$ or $\zeta_5$, then direct matrix multiplications imply that the nontrivial elements \(s_i s_{i+1} s_i s_{i+1}\) of $M_kVT_n$ belong to \(\ker(\zeta)\) for all $1\leq i \leq n-2$. Consequently, \(\zeta\) is unfaithful. 
    \item[(iii)]  If \(\zeta \simeq\) $\zeta_6$, then the nontrivial elements \(s_i \rho_{i+1}^\alpha s_i \rho_{i+1}^\alpha s_i \rho_{i+1}^\alpha s_i \rho_{i+1}^\alpha\) of $M_kVT_n$ belong to \(\ker(\zeta)\) for all $1\leq i \leq n-2$ and $0\leq \alpha \leq k-1$. Hence, \(\zeta\) is unfaithful.
    \item[(iv)] If \(\zeta\simeq \) \(\zeta_7\), then the nontrivial elements \(s_{i+1} \rho_i^\alpha s_{i+1} \rho_i^\alpha s_{i+1} \rho_i^\alpha s_{i+1} \rho_i^\alpha \) of $M_kVT_n$ belong to \(\ker(\zeta)\) for all $1\leq i \leq n-2$ and $0\leq \alpha \leq k-1$. Thus, $\zeta$ is unfaithful.
\end{itemize}
\item[(b)] Suppose that \(\zeta\) is equivalent to $\zeta_8$. We consider each case separately.
\begin{itemize}
    \item[(i)] If $a=-1$, then the nontrivial elements $s_i \rho_{i+1}^\alpha s_i \rho_{i+1}^\alpha s_i \rho_{i+1}^\alpha s_i \rho_{i+1}^\alpha$ of $M_kVT_n$ belong to $\ker(\zeta)$ for all $1\leq i \leq n-2$ and $0\leq \alpha \leq k-1$. Hence, $\zeta$ is unfaithful.
    \item[(ii)] If $a=0$, then the nontrivial elements \(s_i \rho_{i+1}^\alpha s_i \rho_{i+1}^\alpha s_i \rho_{i+1}^\alpha\) of $M_kVT_n$ belong to \(\ker(\zeta)\) for all $1\leq i \leq n-2$ and $0\leq \alpha \leq k-1$, and so \(\zeta\) is unfaithful.
    \item[(iii)] If $a=1$, then the nontrivial elements $s_{i+1}\rho_i^\alpha s_{i+1}\rho_i^\alpha s_{i+1}\rho_i^\alpha s_{i+1}\rho_i^\alpha$ of $M_kVT_n$ belong to $\ker(\zeta)$ for all $1\leq i \leq n-2$ and $0\leq \alpha \leq k-1$ and so \(\zeta\) is again unfaithful.
\end{itemize}
\end{itemize}
\end{proof}

\begin{question}
A natural question arises regarding the faithfulness of $\zeta_8$: for $a \neq 0, \pm 1$, is the representation $\zeta_8$ faithful?
\end{question}


We now investigate the irreducibility of all complex homogeneous
$2$-local representations of $M_kVT_n$ for $k\geq 1$ and $n \ge 3$. 

\begin{theorem}
Let $
\zeta : M_kVT_n \longrightarrow \mathrm{GL}_n(\mathbb{C})$
be a complex homogeneous \(2\)-local representation of \(M_kVT_n\) with
$k\geq 1$ and $n \ge 3$. By Theorem~\ref{thm 41}, the representation $\zeta$ is equivalent to one of the eight
representations $\zeta_i$, $1 \le i \le 8$, described therein.
The reducibility of $\zeta$ is characterized as follows.
\begin{itemize}
    \item[(a)] If $\zeta$ is equivalent to $\zeta_1$, then $\zeta$ is reducible.
    \item[(b)] If $\zeta$ is equivalent to one of $\zeta_2, \zeta_3, \zeta_4,$ or $\zeta_5$,
    then $\zeta$ is reducible if and only if
    $y_{\beta_1}=y_{\beta_2}$ for all $0 \le \beta_1, \beta_2 \le k-1$.
    \item[(c)] If $\zeta$ is equivalent to one of $\zeta_6$ or $\zeta_7$,
    then $\zeta$ is irreducible.
    \item[(d)] If $\zeta$ is equivalent to $\zeta_8$, then $\zeta$ is reducible
    if and only if $y_{\beta_1}=y_{\beta_2}:=y$ for all $0 \le \beta_1, \beta_2 \le k-1$ and $\left( \dfrac{b}{y}=1+a \text{ \ or\ \  } \dfrac{b}{y}=1-a\right).$
\end{itemize}
\end{theorem}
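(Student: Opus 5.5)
The plan is to reduce everything to the representation theory of $S_n$ through the subgroup generated by the $\rho_i^0$. Conjugate $\zeta_j$ by the diagonal matrix $D=\mathrm{diag}(d_1,\dots,d_n)$ with $d_1=1$ and $d_{i+1}=y_0d_i$. A direct check shows that $D^{-1}\zeta_j(\rho_i^0)D$ has middle block $\left(\begin{smallmatrix}0&1\\1&0\end{smallmatrix}\right)$, i.e. it is the permutation matrix of $(i\ \ i+1)$. Likewise $D^{-1}\zeta_j(\rho_i^\alpha)D$ has middle block $\left(\begin{smallmatrix}0&t_\alpha^{-1}\\ t_\alpha&0\end{smallmatrix}\right)$ with $t_\alpha=y_\alpha/y_0$. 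For the $s_i$, the conjugated middle block $M$ becomes $\left(\begin{smallmatrix}m_{11}&y_0m_{12}\\ m_{21}/y_0&m_{22}\end{smallmatrix}\right)$. All reducibility questions may then be asked in these coordinates.

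The key step is as follows. Restricted to $\langle\rho_1^0,\dots,\rho_{n-1}^0\rangle$, the representation is the natural permutation representation of $S_n$ on $\mathbb{C}^n$. For $n\ge 3$ this is the direct sum of the trivial line $L=\mathrm{span}(1,\dots,1)$ and the irreducible standard module $H=\{v:\sum v_i=0\}$, and these are non-isomorphic. Hence the only proper nonzero invariant subspaces of $\zeta_j$ can be $L$ and $H$, so $\zeta_j$ is reducible if and only if $L$ or $H$ is invariant under every generator. Invariance of $L$ means that each $2\times2$ block $B$ satisfies $B(1,1)^T=(1,1)^T$, since the identity part forces eigenvalue $1$. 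Invariance of $H$ means that the column sums of $B$ are both $1$, i.e. $(1,1)B=(1,1)$.

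Apply this to $\rho_i^\alpha$. Its block sends $(1,1)^T$ to $(t_\alpha^{-1},t_\alpha)^T$ and has column sums $(t_\alpha,t_\alpha^{-1})$. So $L$ or $H$ survives only if $t_\alpha=1$ for every $\alpha$, that is, all $y_\alpha$ are equal. Whenever some $y_{\beta_1}\neq y_{\beta_2}$, $\zeta$ is irreducible, and this gives the ``only if'' directions in (b) and (d). Assuming all $y_\alpha=y$, it remains to test the $s_i$-block.

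For $\zeta_1$ and $\zeta_2$ the $s_i$-block is the identity, so both $L$ and $H$ are invariant. For $\zeta_8$ the conjugated block is $\left(\begin{smallmatrix}-a&-(a^2-1)y/b\\ b/y&a\end{smallmatrix}\right)$; put $c=b/y$. The condition for $L$ reduces to $c+a=1$, and the first coordinate equation is then automatic. The condition for $H$ reduces to $c-a=1$, with the second column sum automatic. This yields exactly $b/y=1\pm a$, as in (d). For $\zeta_3,\zeta_4,\zeta_5,\zeta_6,\zeta_7$ the same two tests are applied to the diagonal or upper-triangular $s$-blocks. I expect this last bookkeeping to be the main obstacle. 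A naive check suggests that a diagonal block with a $-1$ entry preserves neither $L$ nor $H$. It also suggests that for $\zeta_6$ and $\zeta_7$ the $H$-test imposes a condition such as $zy=2$ (respectively $-zy=2$, up to the conjugation factor) rather than never holding. So in cases (b) and (c) I would recompute carefully, keeping track of the conjugation, and reconcile the result with the stated characterization before writing the final argument.
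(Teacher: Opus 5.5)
Your method is sound, and the ``naive check'' you were uneasy about is correct. What cannot be completed is not your argument but the statement: (b) and (c) are false as written.

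\textbf{What your test actually gives.} Your reduction says that in the $D$-coordinates every $\zeta$-invariant subspace is one of $0$, $L$, $H$, $\mathbb{C}^n$. Moreover $L$ (resp.\ $H$) is invariant iff $B\mathbf{1}=\mathbf{1}$ (resp.\ $\mathbf{1}^TB=\mathbf{1}^T$) for every $2\times 2$ block $B$.
\begin{itemize}
\item[(b)] The blocks $\mathrm{diag}(1,-1)$, $\mathrm{diag}(-1,1)$ and $-I_2$ of $\zeta_3,\zeta_4,\zeta_5$ fail both tests. So these three representations are irreducible for every choice of the $y_\alpha$. For instance, with $k=1$, $n=3$, $y_0=1$, $\zeta_3$ generates all signed $3\times 3$ permutation matrices, which act irreducibly. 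Part (b) holds only for $\zeta_2$.
\item[(c)] For $\zeta_6$ the conjugated block $\left(\begin{smallmatrix}1&yz\\0&-1\end{smallmatrix}\right)$ has column sums $(1,yz-1)$. So $H$ is invariant exactly when $yz=2$. For example, with $k=1$, $n=3$, $y=1$, $z=2$, the plane $v_1+v_2+v_3=0$ is $\zeta_6$-invariant.
\item[(c)] For $\zeta_7$ the block $\left(\begin{smallmatrix}-1&yz\\0&1\end{smallmatrix}\right)$ never preserves $H$. But $B\mathbf{1}=(yz-1,1)^T$, so it is $L$ that is invariant, again exactly when $yz=2$ (not $-yz=2$). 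In original coordinates, $(1,y,\dots,y^{n-1})^T$ spans a $\zeta_7$-invariant line whenever all $y_\alpha=y$ and $yz=2$.
\end{itemize}
The correct version of (c) is therefore: $\zeta_6$ (resp.\ $\zeta_7$) is reducible iff all $y_\alpha$ equal a common $y$ and $yz=2$. Your computation for (d) is right and agrees with the paper's final answer.

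\textbf{Where the paper's proof goes wrong.} The paper uses the same diagonal conjugation, but then chases vectors by hand: from a nonzero $u\in U$, either $u\propto\mathbf{1}$ or all $e_i-e_{i+1}\in U$. Two steps fail.
\begin{itemize}
\item In (b) it treats only $\zeta_2$ and declares the other cases similar. Its sufficiency step, that $(1,\dots,1)^T$ is fixed by $\zeta'(s_i)$, silently uses that the $s$-block is $I_2$.
\item In (c) the identity $\zeta_6'(s_1)(e_2-e_3)+(e_2-e_3)=yz\,e_1$ is miscomputed. The left side is $yz\,e_1-2e_3$. Together with $e_1-e_3\in U$ this only gives $(yz-2)e_1\in U$, so the contradiction needs $yz\neq 2$.
\end{itemize}

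Your route, via the multiplicity-free decomposition $\mathbb{C}^n=L\oplus H$ of the permutation module, is cleaner than the paper's. It reduces the whole question to two $2\times 2$ identities per generator type, and that is exactly why it exposes these errors immediately. Rather than trying to reconcile your results with (b) and (c), finish the argument and state the corrected characterization:
\begin{itemize}
\item $\zeta_1$ is reducible.
\item $\zeta_2$ is reducible iff all $y_\alpha$ coincide.
\item $\zeta_3,\zeta_4,\zeta_5$ are always irreducible.
\item $\zeta_6,\zeta_7$ are reducible iff all $y_\alpha=y$ and $yz=2$.
\item $\zeta_8$ is as in (d).
\end{itemize}
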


\begin{proof}
We analyze each case separately, omitting the case (a) as it is trivial.
\begin{itemize}
\item[(b)]
Assume that $\zeta$ is equivalent to one of
$\zeta_2, \zeta_3, \zeta_4,$ or $\zeta_5$.
It suffices to treat the case $\zeta \simeq \zeta_2$,
as the remaining cases follow by similar arguments.

For the necessary condition, 
suppose that there exist $0 \le \beta_1 \neq \beta_2 \le k-1$ such that
$y_{\beta_1} \neq y_{\beta_2}$.
Let $\zeta_2'$ be the representation equivalent to $\zeta_2$ defined by
\[
\zeta_2'=P^{-1}\zeta_2P,
\qquad
P=\mathrm{diag}\big(y_{\beta_1}^{1-n},\,y_{\beta_1}^{2-n},\ldots,y_{\beta_1}^{-1},1\big).
\]
A direct computation shows that, for the generators $\rho_i^{\beta_1}$ and $\rho_i^{\beta_2}$ with $1\le i\le n-1$, the images under $\zeta_2'$ are given by $$\zeta_2'(\rho_i^{\beta_1})=\left( \begin{array}{c|@{}c|c@{}} \begin{matrix} I_{i-1} \end{matrix} & \textbf{0} & \textbf{0} \\ \hline \textbf{0} &\hspace{0.2cm} \begin{matrix} 0 & 1\\ 1 & 0\\ \end{matrix} & \textbf{0} \\ \hline \textbf{0} & \textbf{0} & I_{n-i-1} \end{array} \right)$$
and
$$\zeta_2'(\rho_i^{\beta_2})=\left( \begin{array}{c|@{}c|c@{}} \begin{matrix} I_{i-1} \end{matrix} & \textbf{0} & \textbf{0} \\ \hline \textbf{0} &\hspace{0.2cm} \begin{matrix} 0 & \dfrac{y_{\beta_1}}{y_{\beta_2}}\\ \dfrac{y_{\beta_2}}{y_{\beta_1}} & 0\\ \end{matrix} & \textbf{0} \\ \hline \textbf{0} & \textbf{0} & I_{n-i-1} \end{array} \right).$$
Assume, toward a contradiction, that $\zeta_2$ is reducible.
Then $\zeta_2'$ is reducible and so it admits a nontrivial proper invariant subspace
$U \subset \mathbb{C}^n$.
Let $u=(u_1,u_2,\ldots,u_n)^T \in U$ be nonzero. For each $1 \le i \le n-1$, we have
\[
\zeta_2'(\rho_i^{\beta_1})u - u
= (u_{i+1}-u_i)(e_i-e_{i+1}) \in U.
\]
If $u_i=u_{i+1}$ for all $1\leq i\leq n-1$, then $u$ is a scalar multiple of
$(1,1,\ldots,1)^T$, which is impossible since
$U$ is invariant under $\zeta_2'(\rho_i^{\beta_2})$ and
$y_{\beta_1} \neq y_{\beta_2}$.
Hence, there exists $1 \le j \le n-1$ such that $u_j\neq u_{j+1}$, and so we get that $e_j-e_{j+1} \in U$.
By applying $\zeta_2'(\rho_{j+1}^{\beta_1})$
and $\zeta_2'(\rho_{j-1}^{\beta_1})$ to $e_j-e_{j+1}$, we obtain
$$
\zeta_2'(\rho_{j+1}^{\beta_1})(e_j - e_{j+1})-(e_j - e_{j+1}) = e_{j+1} - e_{j+2} \in U
$$ 
and
$$\zeta_2'(\rho_{j-1}^{\beta_1})(e_j - e_{j+1})-(e_j - e_{j+1}) = e_{j-1} - e_j \in U.
$$
Continuing in the process we get 
\[
e_i-e_{i+1} \in U \quad \text{for all } 1 \le i \le n-1.
\]
Thus, no standard basis vector $e_i$ lies in $U$; otherwise,
$U=\mathbb{C}^n$, a contradiction.
Next,
\[
\zeta_2'(\rho_1^{\beta_2})(e_1-e_2)
+ \dfrac{y_{\beta_2}}{y_{\beta_1}}(e_1-e_2)
= \left(\dfrac{y_{\beta_2}}{y_{\beta_1}}
- \dfrac{y_{\beta_1}}{y_{\beta_2}}\right)e_1 \in U.
\]
Since $e_1 \notin U$, it follows that
$y_{\beta_1}=\pm y_{\beta_2}$.
As $y_{\beta_1}\neq y_{\beta_2}$, we must have
$y_{\beta_1}=-y_{\beta_2}$.
In this case we have,
\[
\zeta_2'(\rho_1^{\beta_1})(e_2-e_3)
-\zeta_2'(\rho_1^{\beta_2})(e_2-e_3)=2e_1 \in U,
\]
which implies a contradiction since $e_1\notin U$. Thus, $\zeta_2$ is irreducible whenever
$y_{\beta_1}\neq y_{\beta_2}$ for some $0\leq \beta_1 \neq \beta_2\leq k$, as required.\vspace{0.1cm}

For the sufficient condition, suppose that we have $y_{\beta_1}=y_{\beta_2}$ for all $0\leq \beta_1,\beta_2\leq k-1$. Then the vector $(1,1,\ldots,1)^T$ is invariant under the action of $\zeta_2'(s_i)$ and $\zeta_2'(\rho_i^\alpha)$ for all $1\leq i \leq n$ and \(0 \le \alpha \le k-1\). Hence, $\zeta_2'$ admits a nontrivial invariant subspace and is therefore reducible. Thus, $\zeta_2$ is also reducible. \vspace{0.1cm}
\item[(c)]
Assume that $\zeta$ is equivalent to $\zeta_6$ or $\zeta_7$.
Without loss of generality, suppose that $\zeta \simeq \zeta_6$.
We distinguish the following two cases.
\begin{itemize}
\item[(i)]
There exist $0\leq \beta_1 \neq \beta_2 \leq k-1$ such that $y_{\beta_1} \neq y_{\beta_2}$.
In this situation, repeating the argument of case (b) shows that
$\zeta$ is irreducible.
\item[(ii)]
$y_{\beta_1} = y_{\beta_2} := y$ for all $0 \le \beta_1, \beta_2 \le k-1$.
In this case, we consider an equivalent representation defined as follows.
Let $\zeta_6'$ be the representation equivalent to $\zeta_6$ given by
\[
\zeta_6'=Q^{-1}\zeta_6Q,
\qquad
Q=\mathrm{diag}\big(y^{1-n},\,y^{2-n},\ldots,y^{-1},1\big).
\]
A direct computation shows that, for the generators $s_i$ and $\rho_i^\alpha$
with $1 \le i \le n-1$ and $0 \le \alpha \le k-1$, we have
\[
\zeta_6'(s_i)=
\left(
\begin{array}{c|@{}c|c@{}}
\begin{matrix} I_{i-1} \end{matrix} & \mathbf{0} & \mathbf{0} \\
\hline
\mathbf{0} &
\begin{matrix}
\ 1 & yz\\
\ 0 & -1
\end{matrix}
& \mathbf{0} \\
\hline
\mathbf{0} & \mathbf{0} & I_{n-i-1}
\end{array}
\right),
\]
and
\[
\zeta_6'(\rho_i^{\alpha})=
\left(
\begin{array}{c|@{}c|c@{}}
\begin{matrix} I_{i-1} \end{matrix} & \mathbf{0} & \mathbf{0} \\
\hline
\mathbf{0} &
\begin{matrix}
\ 0 & 1\\
\ 1 & 0
\end{matrix}
& \mathbf{0} \\
\hline
\mathbf{0} & \mathbf{0} & I_{n-i-1}
\end{array}
\right).
\]
Assume, toward a contradiction, that $\zeta_6$ is reducible.
Then $\zeta_6'$ is also reducible and so it admits a non-trivial invariant subspace
$U \subset \mathbb{C}^n$.
As in case (b), invariance under
$\zeta_6'(\rho_i^{\alpha})$, $0 \le \alpha \le k-1$,
implies that $e_i - e_{i+1} \in U$ for all $1 \le i \le n-1$.
Consequently, no standard basis vector $e_i$ belongs to $U$;
otherwise, $U = \mathbb{C}^n$, a contradiction. On the other hand, we have
\[
\zeta_6'(s_1)(e_2 - e_3) + (e_2 - e_3) = yz\, e_1,
\]
with $y \neq 0$ and $z \neq 0$, which implies that $e_1 \in U$.
Hence $U = \mathbb{C}^n$, a contradiction.
Therefore, $\zeta_6$ is irreducible.
\end{itemize}

\item[(d)] Suppose now that $\zeta$ is equivalent to $\zeta_8$.

For the necessary condition, assume that $\zeta_8$ is reducible. A work similar to that in case (b) implies that $y_{\beta_1} = y_{\beta_2} := y$ for all $0 \le \beta_1, \beta_2 \le k-1$. So, we consider an equivalent representation defined in a similar way as in case (c). Let $\zeta_8'$ be the representation equivalent to $\zeta_8$ given by
\[
\zeta_8'=Q^{-1}\zeta_8Q,
\qquad
Q=\mathrm{diag}\big(y^{1-n},\,y^{2-n},\ldots,y^{-1},1\big).
\]
A direct computation shows that, for the generators $s_i$ and $\rho_i^\alpha$
with $1 \le i \le n-1$ and $0 \le \alpha \le k-1$, the actions under $\zeta_8'$ are given as follows.
\[
\zeta_8'(s_i)=
\left(
\begin{array}{c|@{}c|c@{}}
\begin{matrix} I_{i-1} \end{matrix} & \mathbf{0} & \mathbf{0} \\
\hline
\mathbf{0} &
\begin{matrix}
\ -a & - \dfrac{(a^2-1)y}{b}\\
\ \dfrac{b}{y} & a
\end{matrix}
& \mathbf{0} \\
\hline
\mathbf{0} & \mathbf{0} & I_{n-i-1}
\end{array}
\right),
\]
and
\[
\zeta_8'(\rho_i^{\alpha})=
\left(
\begin{array}{c|@{}c|c@{}}
\begin{matrix} I_{i-1} \end{matrix} & \mathbf{0} & \mathbf{0} \\
\hline
\mathbf{0} &
\begin{matrix}
\ 0 & 1\\
\ 1 & 0
\end{matrix}
& \mathbf{0} \\
\hline
\mathbf{0} & \mathbf{0} & I_{n-i-1}
\end{array}
\right).
\]
As $\zeta_8$ is reducible, we have $\zeta'_8$ is also reducible, and so it admits a nontrivial invariant subspace
$U \subset \mathbb{C}^n$. Let $u=(u_1,u_2,\ldots,u_n)^T \in U$ be nonzero. For each $1 \le i \le n-1$, we have
\[
\zeta_2'(\rho_i^{\alpha})u - u
= (u_{i+1}-u_i)(e_i-e_{i+1}) \in U.
\]
We now consider two cases as follows.
\begin{itemize}
    \item[(i)] Suppose that there exists $1 \le j \le n-1$ such that that $u_j\neq u_{j+1}$, then we get, as in case (b), that
\[
e_i-e_{i+1} \in U \quad \text{for all } 1 \le i \le n-1.
\]
In this case, we can see that 
$$\zeta_8'(s_1)(e_1-e_2)+\zeta_8'(s_1)(e_2-e_3)-(e_2-e_3)+a(e_1-e_2)=\left( \dfrac{b}{y}-1-a \right)e_2\in U.$$
But $e_2$ can not be in $U$, otherwise we get $U=\mathbb{C}^n$, which is impossible. Hence, we get that $ \dfrac{b}{y}-1-a=0$, and so $ \dfrac{b}{y}=1+a$.
\item[(ii)] Suppose now $u_j= u_{j+1}$ for all $1\leq j\leq n-1$. Then $U$ is generated by the vector $(1,1,\ldots ,1)^T$. Similarly, as in (i), we get that $\dfrac{b}{y}=1-a.$
\end{itemize}
Thus, we get that either $ \dfrac{b}{y}=1+a$ or $\dfrac{b}{y}=1-a,$ as required.

For the sufficient condition, suppose that $y_{\beta_1}=y_{\beta_2}$ for all $0 \le \beta_1, \beta_2 \le k-1$. We consider two cases as follows.
\begin{itemize}
    \item[(i)] If $\dfrac{b}{y}=1+a$, then one readily verifies that the vector $(1,1,\ldots,1)^T$ is invariant on the matrices $\zeta_8'(s_i)$ and $\zeta_8'(\rho_i^{\alpha})$ by left multiplication, for all $1\leq i\leq n-1$ and $0\leq \alpha \leq k-1$.
    
    \item[(ii)] If $\dfrac{b}{y}=1-a$, then one readily verifies that the vector $(1,1,\ldots,1)^T$ is invariant on the matrices $\zeta_8'(s_i)$ and $\zeta_8'(\rho_i^{\alpha})$ by right multiplication, for all $1\leq i\leq n-1$ and $0\leq \alpha \leq k-1$.
\end{itemize}

Hence, $\zeta_8'$ admits a nontrivial invariant subspace in both cases and is therefore reducible. Thus, $\zeta_8$ is also reducible.
\end{itemize}
\end{proof}


\section{Complex Representations of $M_2VPT_3$}
In this section, we focus on constructing representations of the multi-virtual pure twin group $M_kVPT_n$. Due to the complexity of this group for general values of $k$ and $n$, a complete treatment becomes technically demanding. Therefore, we restrict our attention to the case $k = 2$ and $n = 3$. From Section 3, we can see that the group $M_2VPT_3$ is the subgroup of $M_2VT_3$ that is generated by the elements $$\lambda_{12}^0, \lambda_{13}^0, \lambda_{23}^0, \lambda_{12}^{1}, \lambda_{13}^{1}, \text{ and } \lambda_{23}^{1} $$ subject to the following relations:
\begin{align*}
    \lambda_{12}^1 \lambda_{32}^1 &= \lambda_{32}^1 \lambda_{12}^1,\\
    \lambda_{13}^1 \lambda_{23}^1 &= \lambda_{23}^1  \lambda_{13}^1,\\
     \lambda_{12}^1 \lambda_{13}^1 &= \lambda_{13}^1  \lambda_{12}^1,\\
      \lambda_{12}^1 \lambda_{13}^1 \lambda_{23}^1 &= \lambda_{23}^1 \lambda_{13}^1 \lambda_{12}^1,\\
    \lambda_{12}^1 \lambda_{13}^1 \lambda_{23}^0 &= \lambda_{23}^0 \lambda_{13}^1 \lambda_{12}^1,\\
     \lambda_{21}^1 \lambda_{23}^1 \lambda_{13}^0 &= \lambda_{13}^0 \lambda_{23}^1 \lambda_{21}^1,\\
     \lambda_{13}^1 \lambda_{12}^1 \lambda_{32}^0 &= \lambda_{32}^0 \lambda_{12}^1 \lambda_{13}^1,\\
     \lambda_{23}^1 \lambda_{21}^1 \lambda_{31}^0 &= \lambda_{31}^0 \lambda_{21}^1 \lambda_{23}^1,\\
           \lambda_{31}^1 \lambda_{32}^1 \lambda_{12}^0 &= \lambda_{12}^0 \lambda_{32}^1 \lambda_{31}^1,\\
       \lambda_{32}^1 \lambda_{31}^1 \lambda_{21}^0 &= \lambda_{21}^0 \lambda_{31}^1 \lambda_{32}^1.
     \end{align*}
Moreover, these generators can be expressed in terms of the generators of $M_2VT_3$ in the following way:
$$\lambda_{12}^0=\rho_1s_1,\quad \lambda_{23}^0=\rho_2s_2,\quad 
\lambda_{13}^0=\rho_2\lambda_{12}^0\rho_2=\rho_2\rho_1s_1\rho_2,$$
$$\lambda_{12}^1=\rho_1\rho_1^1,\quad \lambda_{23}^1=\rho_2\rho_2^1,\quad \lambda_{13}^1=\rho_2\lambda_{12}^1\rho_2=\rho_2\rho_1\rho_1^1\rho_2.$$

In the remainder of this section, we investigate families of representations of $M_2VPT_3$ and analyze their irreducibility. We begin by recalling the results established in Section~4 regarding $2$-local representation of $M_kVT_n$, specialized to the case $k = 2$ and $n = 3$. 

\begin{definition}  \label{thm 51}
The following are the non-trivial complex homogeneous  $2$-local representations of $M_2VT_3$ that are classified in Theorem \ref{thm 41}.
$$s_1\mapsto \left(
\begin{array}{ccc}
 t & u & 0 \\
 v & w & 0 \\
 0 & 0 & 1 \\
\end{array}
\right),\quad 
s_2\mapsto \left(
\begin{array}{ccc}
 1 & 0 & 0 \\
 0 & t & u \\
 0 & v & w \\
\end{array}
\right),\quad 
\rho^0_1\mapsto \left(
\begin{array}{ccc}
 0 & \dfrac{1}{y_0} & 0 \\
 y_0 & 0 & 0 \\
 0 & 0 & 1 \\
\end{array}
\right),$$
$$\rho^0_2\mapsto 
\left(
\begin{array}{ccc}
 1 & 0 & 0 \\
 0 & 0 & \dfrac{1}{y_0} \\
 0 & y_0 & 0 \\
\end{array}
\right), \quad
\rho^1_1\mapsto 
\left(
\begin{array}{ccc}
 0 & \dfrac{1}{y_1} & 0 \\
 y_1 & 0 & 0 \\
 0 & 0 & 1 \\
\end{array}
\right),~\text{and}~
\rho^1_2\mapsto
\left(
\begin{array}{ccc}
 1 & 0 & 0 \\
 0 & 0 & \dfrac{1}{y_1} \\
 0 & y_1 & 0 \\
\end{array}
\right),$$
where the complex numbers $t,u,v,w,y_0,$ and $y_1$ satisfy one of the following cases.
\begin{enumerate}
    \item $\{t,w\}\subset\{1,-1\},$ $u=v=0$, $y_0\neq0,$ and $y_1\neq0$.
    \item $t=-w=\pm 1, u=z , v=0,$ $y_0\neq0,$ and $y_1\neq0$.
    \item $t=-a, u=-\dfrac{a^2-1}{b}, v=b, w=a,$ $y_0\neq0,$ and $y_1\neq0$.
\end{enumerate}
\end{definition}

By direct computations, we calculate the images of the generators of $M_2VPT_3$ under the representations given in Definition  \ref{thm 51}. In the following definition, we present some families of representations of $M_2VPT_3$.

\begin{definition}
 The representations of $M_2VPT_3$ that are obtained from the representations of $M_2VT_3$ given by Definition \ref{thm 51} are as follows.
\begin{enumerate}

\item $\lambda^0_{12}\mapsto\left(
\begin{array}{ccc}
 0 & \epsilon\dfrac{1}{y_0} & 0 \\
 \delta y_0 & 0 & 0 \\
 0 & 0 & 1 \\
\end{array}
\right), \quad
\lambda^0_{23}\mapsto
\left(
\begin{array}{ccc}
 1 & 0 & 0 \\
 0 & 0 & \epsilon \dfrac{1}{y_0} \\
 0 & \delta y_0 & 0 \\
\end{array}
\right),$\\
$\lambda^0_{13}\mapsto
\left(
\begin{array}{ccc}
 0 & 0 & \epsilon\dfrac{1}{y_0^2} \\
 0 & 1 & 0 \\
\delta y_0^2 & 0 & 0 \\
\end{array}
\right), \quad
\lambda^1_{12}\mapsto
\left(
\begin{array}{ccc}
 \dfrac{y_1}{y_0} & 0 & 0 \\
 0 & \dfrac{y_0}{y_1} & 0 \\
 0 & 0 & 1 \\
\end{array}
\right),$\\
$\lambda^1_{23}\mapsto
\left(
\begin{array}{ccc}
 1 & 0 & 0 \\
 0 & \dfrac{y_1}{y_0} & 0 \\
 0 & 0 & \dfrac{y_0}{y_1} \\
\end{array}
\right), \quad
\lambda^1_{13}\mapsto
\left(
\begin{array}{ccc}
 \dfrac{y_1}{y_0} & 0 & 0 \\
 0 & 1 & 0 \\
 0 & 0 & \dfrac{y_0}{y_1} \\
\end{array}
\right),$\\\\
where $y_0$ and $y_1$ are non-zero complex numbers and $\{\epsilon,\delta\}\subset\{-1,+1\}$.\\
\item $\lambda^0_{12}\mapsto
\left(
\begin{array}{ccc}
 0 & \mp\dfrac{1}{y_0} & 0 \\
\pm y_0 & y_0 z & 0 \\
 0 & 0 & 1 \\
\end{array}
\right), \quad
\lambda^0_{23}\mapsto
\left(
\begin{array}{ccc}
 1 & 0 & 0 \\
 0 & 0 & \mp\dfrac{1}{y_0} \\
 0 & \pm y_0 & y_0 z \\
\end{array}
\right),$ \\
$\lambda^0_{13}\mapsto
\left(
\begin{array}{ccc}
 0 & 0 & \mp\dfrac{1}{y_0^2} \\
 0 & 1 & 0 \\
\pm y_0^2 & 0 & y_0 z \\
\end{array}
\right), \quad
\lambda^1_{12}\mapsto
\left(
\begin{array}{ccc}
 \dfrac{y_1}{y_0} & 0 & 0 \\
 0 & \dfrac{y_0}{y_1} & 0 \\
 0 & 0 & 1 \\
\end{array}
\right),$\\
$\lambda^1_{23}\mapsto
\left(
\begin{array}{ccc}
 1 & 0 & 0 \\
 0 & \dfrac{y_1}{y_0} & 0 \\
 0 & 0 & \dfrac{y_0}{y_1} \\
\end{array}
\right), \quad \quad ~
\lambda^1_{13}\mapsto
\left(
\begin{array}{ccc}
 \dfrac{y_1}{y_0} & 0 & 0 \\
 0 & 1 & 0 \\
 0 & 0 & \dfrac{y_0}{y_1} \\
\end{array}
\right),$\\\\
where $y_0$, $y_1$,  and $z$ are non-zero complex numbers.\\
\item
$\lambda^0_{12}\mapsto\left(
\begin{array}{ccc}
 \dfrac{b}{y_0} & \dfrac{a}{y_0} & 0 \\
 -ay_0 & \dfrac{\left(1-a^2\right) y_0}{b} & 0 \\
 0 & 0 & 1 \\
\end{array}
\right), \quad
\lambda^0_{23}\mapsto\left(
\begin{array}{ccc}
 1 & 0 & 0 \\
 0 & \dfrac{b}{y_0} & \dfrac{a}{y_0} \\
 0 & -a y_0 & \dfrac{\left(1-a^2\right) y_0}{b} \\
\end{array}
\right),\\
\lambda^0_{13}\mapsto
\left(
\begin{array}{ccc}
 \dfrac{b}{y_0} & 0 & \dfrac{a}{y_0^2} \\
 0 & 1 & 0 \\
 -a y_0^2 & 0 & \dfrac{\left(1-a^2\right) y_0}{b} \\
\end{array}
\right), \quad
\lambda^1_{12}\mapsto
\left(
\begin{array}{ccc}
 \dfrac{y_1}{y_0} & 0 & 0 \\
 0 & \dfrac{y_0}{y_1} & 0 \\
 0 & 0 & 1 \\
\end{array}
\right),\\
\lambda^1_{23}\mapsto
\left(
\begin{array}{ccc}
 1 & 0 & 0 \\
 0 & \dfrac{y_1}{y_0} & 0 \\
 0 & 0 & \dfrac{y_0}{y_1} \\
\end{array}
\right), \hspace{2.1cm}
\lambda^1_{13}\mapsto
\left(
\begin{array}{ccc}
 \dfrac{y_1}{y_0} & 0 & 0 \\
 0 & 1 & 0 \\
 0 & 0 & \dfrac{y_0}{y_1} \\
\end{array}
\right)$,\\\\
where $a\in\mathbb{C}$, and  $y_0$, $y_1$, and $b$ are non-zero complex numbers.
\end{enumerate}
\end{definition}

\begin{theorem}
Consider the representation $\zeta: M_2VPT_3\longrightarrow \mathrm{GL}_{3}(\mathbb{C}) $  defined by
$$\lambda^0_{12}\mapsto\left(
\begin{array}{ccc}
 0 & \dfrac{1}{y_0} & 0 \\
 y_0 & 0 & 0 \\
 0 & 0 & 1 \\
\end{array}
\right), \quad
\lambda^0_{23}\mapsto\left(
\begin{array}{ccc}
 1 & 0 & 0 \\
 0 & 0 & \dfrac{1}{y_0} \\
 0 & y_0 & 0 \\
\end{array}
\right), \quad
\lambda^0_{13}\mapsto\left(
\begin{array}{ccc}
 0 & 0 & \dfrac{1}{y_0^2} \\
 0 & 1 & 0 \\
 y_0^2 & 0 & 0 \\
\end{array}
\right),$$
$$\lambda^1_{12}\mapsto\left(
\begin{array}{ccc}
 \dfrac{y_1}{y_0} & 0 & 0 \\
 0 & \dfrac{y_0}{y_1} & 0 \\
 0 & 0 & 1 \\
 
\end{array}
\right),\quad
\lambda^1_{23}\mapsto\left(
\begin{array}{ccc}
 1 & 0 & 0 \\
 0 & \dfrac{y_1}{y_0} & 0 \\
 0 & 0 & \dfrac{y_0}{y_1} \\
\end{array}
\right),\quad
\lambda^1_{13}\mapsto\left(
\begin{array}{ccc}
 \dfrac{y_1}{y_0} & 0 & 0 \\
 0 & 1 & 0 \\
 0 & 0 & \dfrac{y_0}{y_1} \\
\end{array}
\right).$$
Then $\zeta$ is irreducible if and only if $y_0\neq y_1$ .
    
\end{theorem}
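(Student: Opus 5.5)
The plan is to work directly with the six matrices, treating the two directions separately.

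\emph{Reducibility when $y_0=y_1$.} In this case the three images $\zeta(\lambda^1_{12})$, $\zeta(\lambda^1_{23})$ and $\zeta(\lambda^1_{13})$ are all equal to $I_3$. The image group is therefore generated by the three monomial matrices $\zeta(\lambda^0_{12})$, $\zeta(\lambda^0_{23})$ and $\zeta(\lambda^0_{13})$. I will exhibit a common invariant vector. Conjugating by $Q=\mathrm{diag}(y_0^{-2},y_0^{-1},1)$, as in the proof of Theorem 4.3, turns each $\lambda^0$-image into a permutation matrix for a transposition:
\begin{itemize}
\item for $\lambda^0_{12}$, the block $\begin{pmatrix}0&1/y_0\\ y_0&0\end{pmatrix}$ on coordinates $1,2$ becomes $\begin{pmatrix}0&1\\1&0\end{pmatrix}$, since $q_2/q_1=y_0$;
\item similarly for $\lambda^0_{23}$;
\item for $\lambda^0_{13}$, the entry $y_0^{-2}$ is rescaled by $q_3/q_1=y_0^{2}$, again giving a permutation matrix.
\end{itemize}
Hence $(1,1,1)^T$ is fixed by all of these, and in the original coordinates the vector $Q(1,1,1)^T=(y_0^{-2},y_0^{-1},1)^T$ spans an invariant line. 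So $\zeta$ is reducible when $y_0=y_1$.

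\emph{Irreducibility when $y_0\neq y_1$.} Set $r=y_1/y_0\neq 1$. The diagonal generators are $\mathrm{diag}(r,r^{-1},1)$, $\mathrm{diag}(1,r,r^{-1})$ and $\mathrm{diag}(r,1,r^{-1})$. Let $U$ be a nonzero invariant subspace. The first step is to show that $U$ contains some standard basis vector $e_i$.

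If $r^2\neq 1$, the matrix $D=\mathrm{diag}(r,1,r^{-1})$ has three distinct eigenvalues. Then $U$ is a sum of its eigenspaces, so $U$ contains some $e_i$.

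If $r=-1$, every diagonal generator has a repeated eigenvalue, and this is the expected obstacle. Here $D=\mathrm{diag}(-1,1,-1)$ and $\mathrm{diag}(-1,-1,1)$ together still separate the coordinates: their joint eigenspaces are the lines $\mathbb{C}e_1$, $\mathbb{C}e_2$, $\mathbb{C}e_3$, with joint eigenvalue pairs $(-1,-1)$, $(1,-1)$ and $(-1,1)$. An invariant $U$ is invariant under this commuting diagonal family, so it is a sum of joint eigenspaces and again contains some $e_i$.

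The second step is to move from one basis vector to all of them. The monomial matrices $\zeta(\lambda^0_{12})$, $\zeta(\lambda^0_{23})$ and $\zeta(\lambda^0_{13})$ send $e_i$ to nonzero multiples of $e_j$ for every transposition $(i\,j)$. Applying them to the $e_i$ in $U$ puts every standard basis vector in $U$, so $U=\mathbb{C}^3$.

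The only genuine care needed is the degenerate case $r=-1$, which the joint-eigenspace argument handles.
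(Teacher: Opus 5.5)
Your proof is correct and is essentially the paper's argument. For $y_0\neq y_1$ you use the diagonal images $\zeta(\lambda^1_{ij})$ to force a nonzero invariant subspace to contain a coordinate vector, then the monomial images $\zeta(\lambda^0_{ij})$ to reach all of $e_1,e_2,e_3$; for $y_0=y_1$ your invariant line $\mathbb{C}(y_0^{-2},y_0^{-1},1)^T$ is the paper's $\mathbb{C}(e_1+y_1e_2+y_1^2e_3)$. The one difference is the case $y_0=-y_1$: you separate the coordinates using joint eigenspaces of two diagonal generators, whereas the paper checks directly that no $\langle e_1+xe_2\rangle$ or $\langle e_1+xe_2,e_3\rangle$ is invariant; your version is slightly cleaner and in fact works uniformly for every $r\neq 1$.
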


\begin{proof}
The eigenvalues of $\lambda^1_{12}$ are $\dfrac{y_0}{y_1}, 1, \dfrac{y_1}{y_0}$, and the corresponding eigenvectors are $e_1=(1,0,0), e_2=(0,1,0), e_3=(0,0,1)$ respectively.
Note that $\lambda^1_{12}$, $\lambda^1_{23}$, and $\lambda^1_{13}$ have the same eigenvalues and eigenvectors. We distinguish three cases:
\begin{itemize}
    \item[(i)] $y_0^2\neq y_1^2$, the case of 3 distinct eigenvalues,
    \item[(ii)] $y_0=-y_1$, the case of 2 equal eigenvalues,
    \item[(iii)] $y_0=y_1$, the case of 3 equal eigenvalues.
\end{itemize}
By direct computations, we obtain
$$\zeta(\lambda^0_{12})e_1=y_0e_2, ~\zeta(\lambda^0_{23})e_2=y_0e_3,$$ $$\zeta(\lambda^0_{23})e_3=\dfrac{1}{y_0}e_2, \text{ and } \zeta(\lambda^0_{12})e_2=\dfrac{1}{y_0}e_1.$$

This shows that neither the one-dimensional subspaces $\langle e_i\rangle$ nor the two-dimensional subspaces $\langle e_i,e_j\rangle$ are invariant under $\zeta$ for $1\leq i,j\leq 3$. Therefore, in case (i), the representation $\zeta$ is irreducible.

For case (ii), the eigenvalues of $\lambda^1_{12}$ are $-1, -1$ and $1$ with eigenvectors $e_1,e_2$ and $e_3$.
In this case, it remains to check whether the subspaces $\langle e_1+xe_2\rangle$ and $\langle e_1+xe_2, e_3\rangle $ are invariant for $x\in\mathbb{C}\setminus\{0\}$. Note that $\zeta(\lambda^0_{13})(e_1+xe_2)=xe_2+y_1^2e_3\not\in\langle e_1+xe_2\rangle$. This implies that $\langle e_1+xe_2\rangle$ is not invariant under $\zeta$ for all $x\in\mathbb{C}\setminus\{0\}.$ Let $S$ be a non-trivial invariant subspace of $\mathbb{C}^3$ spanned by the vectors $e_1+xe_2$ and $e_3$. Since $\zeta(\lambda^0_{23})(e_3)=-\dfrac{1}{y_1}e_2$, it follows that $e_2\in S$.
Then $e_1\in S$ because $e_1=e_1+xe_2-xe_2$. Thus, $S=\mathbb{C}^3$ as it contains $e_1, e_2$ and $e_3$.
This contradicts the fact that $S$ is a non-trivial subspace. Hence, there are no non-trivial invariant subspaces of the form $\langle e_1+xe_2, e_3\rangle$.
Therefore, in case (ii), the representation $\zeta$ is irreducible.

In case (iii), we can easily check that the subspace spanned by the vector $e_1+y_1e_2+y_1^2e_3$ is an invariant subspace under $\zeta$. Hence, $\zeta$, in case (iii) is reducible. This completes the proof.
\end{proof}

\begin{theorem}
Consider the representation $\zeta: M_2VPT_3\longrightarrow \mathrm{GL}_{3}(\mathbb{C}) $  defined by

$$\lambda^0_{12}\mapsto\left(
\begin{array}{ccc}
 \dfrac{b}{y_0} & \dfrac{a}{y_0} & 0 \\
 -ay_0 & \frac{\left(1-a^2\right) y_0}{b} & 0 \\
 0 & 0 & 1 \\
\end{array}
\right),\quad
\lambda^0_{23}\mapsto\left(
\begin{array}{ccc}
 1 & 0 & 0 \\
 0 & \dfrac{b}{y_0} & \dfrac{a}{y_0} \\
 0 & -a y_0 & \frac{\left(1-a^2\right) y_0}{b} \\
\end{array}
\right),$$
$$\lambda^0_{13}\mapsto
\left(
\begin{array}{ccc}
 \dfrac{b}{y_0} & 0 & \dfrac{a}{y_0^2} \\
 0 & 1 & 0 \\
 -a y_0^2 & 0 & \frac{\left(1-a^2\right) y_0}{b} \\
\end{array}
\right), \quad
\lambda^1_{12}\mapsto
\left(
\begin{array}{ccc}
 \dfrac{y_1}{y_0} & 0 & 0 \\
 0 & \dfrac{y_0}{y_1} & 0 \\
 0 & 0 & 1 \\
\end{array}
\right),$$
$$\lambda^1_{23}\mapsto
\left(
\begin{array}{ccc}
 1 & 0 & 0 \\
 0 & \dfrac{y_1}{y_0} & 0 \\
 0 & 0 & \dfrac{y_0}{y_1} \\
\end{array}
\right),\quad 
\lambda^1_{13}\mapsto
\left(
\begin{array}{ccc}
 \dfrac{y_1}{y_0} & 0 & 0 \\
 0 & 1 & 0 \\
 0 & 0 & \dfrac{y_0}{y_1} \\
\end{array}
\right).$$

Suppose that $y_0\neq y_1$. 
Then $\zeta$  is irreducible if and only if $a\neq 0$.
     
\end{theorem}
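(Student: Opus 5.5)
The plan is to exploit the three diagonal generators $\lambda^1_{12},\lambda^1_{23},\lambda^1_{13}$, which are simultaneously diagonal in the standard basis. Put $r=y_1/y_0$; the hypothesis $y_0\neq y_1$ gives $r\neq 1$.

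The first step is to show that any $\zeta$-invariant subspace $U\subseteq\mathbb{C}^3$ is a coordinate subspace. Under the ordered triple $(\zeta(\lambda^1_{12}),\zeta(\lambda^1_{23}),\zeta(\lambda^1_{13}))$, the basis vectors $e_1,e_2,e_3$ have the joint eigenvalue triples
\[
e_1\colon (r,1,r),\qquad e_2\colon (r^{-1},r,1),\qquad e_3\colon (1,r^{-1},r^{-1}).
\]
These triples are pairwise distinct precisely because $r\neq 1$. For example, the second coordinates $1,r,r^{-1}$ already separate $e_1$ from $e_2$ and $e_1$ from $e_3$, and the third coordinates $1$ and $r^{-1}$ separate $e_2$ from $e_3$.

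So $\mathbb{C}^3$ decomposes into three distinct one-dimensional joint weight spaces of this commuting family. A standard fact then applies: a subspace invariant under a commuting diagonalizable family is the direct sum of its intersections with the joint weight spaces. This can be seen, for instance, via Lagrange-interpolation polynomials in these operators, which project onto each weight space. Hence $U$ is spanned by some subset of $\{e_1,e_2,e_3\}$. This step replaces the eigenvalue case analysis of the previous theorem, and I expect it to be the only conceptual point. Note that distinctness must be checked for the joint triples, not for a single matrix, since for instance $r=-1$ makes $\lambda^1_{12}$ alone degenerate.

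The second step handles $a\neq 0$. I will test the coordinate subspaces against the matrices $\zeta(\lambda^0_{12}),\zeta(\lambda^0_{23}),\zeta(\lambda^0_{13})$:
\begin{itemize}
\item $\zeta(\lambda^0_{12})$ sends $e_1$ to a vector whose $e_2$-coefficient is $-ay_0\neq 0$, and sends $e_2$ to a vector whose $e_1$-coefficient is $a/y_0\neq0$.
\item $\zeta(\lambda^0_{23})$ couples $e_2$ and $e_3$ in both directions, with the coefficients $-ay_0$ and $a/y_0$.
\item $\zeta(\lambda^0_{13})$ couples $e_1$ and $e_3$ in both directions, with the coefficients $-ay_0^2$ and $a/y_0^2$.
\end{itemize}
Thus any nonzero coordinate subspace containing some $e_i$ must also contain the other two basis vectors, so it equals $\mathbb{C}^3$. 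Together with the first step, $\zeta$ is irreducible.

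The third step handles $a=0$. Here all three $\lambda^0$ matrices become diagonal; for instance, $\zeta(\lambda^0_{12})=\mathrm{diag}(b/y_0,\,y_0/b,\,1)$. Since every generator's image is then diagonal, the line $\langle e_1\rangle$ is a nontrivial proper invariant subspace, and $\zeta$ is reducible. This completes both directions.
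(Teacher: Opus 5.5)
Your proof is correct. Steps two and three match the paper's argument: when $a\neq 0$, the nonzero off-diagonal entries of the $\lambda^0$-images rule out every coordinate subspace, and when $a=0$ the line $\langle e_1\rangle$ is invariant.

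The real difference is in how you reduce to coordinate subspaces. The paper uses only the spectrum of $\zeta(\lambda^1_{12})=\mathrm{diag}(r,r^{-1},1)$, which forces a case split:
\begin{itemize}
\item For $y_0^2\neq y_1^2$ the eigenvalues are distinct, so only coordinate subspaces can be invariant.
\item For $y_0=-y_1$ the plane $\langle e_1,e_2\rangle$ is a single eigenspace. The paper must then separately exclude the lines $\langle e_1+xe_2\rangle$ and the planes $\langle e_1+xe_2,e_3\rangle$ ($x\neq 0$), by explicit vector computations with $\zeta(\lambda^0_{23})$ and $\zeta(\lambda^0_{13})$.
\end{itemize}
Your joint-weight argument with all three commuting diagonal matrices removes the second case entirely. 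For $r=-1$, the matrix $\zeta(\lambda^1_{23})=\mathrm{diag}(1,-1,-1)$ already separates $e_1$ from $e_2$, and $\zeta(\lambda^1_{12})$ separates $e_2$ from $e_3$. The only condition you need is $r\neq 1$, which shows exactly where the hypothesis $y_0\neq y_1$ is used.

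The paper's route is hands-on but longer, and it relies on several extra explicit images being computed correctly. Yours is uniform in $r$ and generalizes directly. Whenever some generators act simultaneously diagonally with pairwise distinct joint weights, irreducibility reduces to strong connectivity of the graph on the basis vectors whose edges record nonzero off-diagonal entries of the remaining generators.
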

\begin{proof}
The eigenvalues of $\lambda^1_{12}$ are: $\dfrac{y_0}{y_1}, \dfrac{y_1}{y_0}, 1$, and the corresponding eigenvectors are $e_1=(1,0,0),  e_2=(0,1,0),  e_3=(0,0,1)$ respectively.
Note that $\lambda^1_{12}$, $\lambda^1_{23}$ and $\lambda^1_{13}$ have the same eigenvalues and eigenvectors.
Direct computations yield the following:
$$\zeta(\lambda^0_{12})(e_1)=\dfrac{b}{y_0}e_1-ay_0e_2,\quad \zeta(\lambda^0_{13})(e_1)=\dfrac{b}{y_0}e_1-ay_0^2e_3, \text{ and } \zeta(\lambda^0_{23})(e_1)=e_1.$$

First, suppose that $a=0$. Then the subspace spanned by the vector $e_1=(1,0,0)$ is invariant. Hence $\zeta$ is reducible if $a=0$.

Now, suppose that $a\neq 0$. We have two cases: \\
(i) $y_0^2\neq y_1^2$ (3 distinct eigenvalues of $\zeta(\lambda_{12}^1)$),\\
(ii) $y_0=-y_1$ (2 equal eigenvalues).

Because we have
$$\zeta(\lambda^0_{23})(e_2)=-\dfrac{b}{y_0}e_2-ay_0e_3, \quad \zeta(\lambda^0_{13})(e_3)=\dfrac{a}{y_0^2}e_1+\dfrac{(1-a^2)y_0}{b}e_3,$$
$$\zeta(\lambda^0_{23})(e_3)=\dfrac{a}{y_0}e_2+\dfrac{(1-a^2)y_0}{b}e_3, \quad \zeta(\lambda^0_{12})(e_2)=\dfrac{a}{y_0}e_1+\dfrac{(1-a^2)y_0}{b}e_3,$$ and
$$\zeta(\lambda^0_{13})(e_1)=\dfrac{b}{y_0}e_1-ay_0^2e_3,$$
it follows that the subspaces  $\langle e_i\rangle$ and the subspaces $\langle e_i,e_j\rangle$ are not invariant.
Hence, in case (i) $y_0^2\neq y_1^2$, the representation $\zeta$ is irreducible if $a\neq 0$.

In case (ii) $y_0=-y_1$, we have to consider the subspaces 
spanned by 
$\langle e_1+xe_2 \rangle $ and the subspaces $\langle e_1+xe_2,e_3\rangle $ for $x\in\mathbb{C}\setminus\{0\}$. Because we have 
$$ \zeta(\lambda_{23}^0)(e_1+xe_2)=e_1-\dfrac{bx}{y_1}e_2+ay_1xe_3 \text{ and } \zeta(\lambda_{13}^0)e_3=\dfrac{a}{y_1^2}e_1-\dfrac{\left(1-a^2\right)y_1}{b}e_3,$$
it follows that the subspaces spanned by $e_1+xe_2$ and the subspaces $\langle e_1+xe_2,e_3\rangle$ are not invariant for all $x\in\mathbb{C}\setminus\{0\}$. Hence, $\zeta$ is irreducible in case $y_0=-y_1$, and this completes the proof.
\end{proof}
\section{Conclusion}

Since the construction of these groups is new, several natural directions for future research arise. In particular, it would be of interest to introduce the multi-welded twin group, analogous to the multi-welded braid group. Another promising direction is to construct representations of the multi-virtual semi-pure twin group $M_kVHT_n$, in a manner similar to the representations of $M_kVPT_n$, and to study their algebraic properties. Furthermore, it is desirable to generalize the results obtained for $M_2VPT_3$ to arbitrary values of $k$ and $n$, and to investigate whether similar techniques and conclusions extend to the general case.

\section{Acknowledgment}
The first author acknowledges the support of the University Grants Commission
(UGC), India, for a research fellowship with NTA Ref. No.231610035955. The third author acknowledges the support of the Anusandhan National Research
Foundation (ANRF) with sanction order no. CRG/2023/004921.

\end{document}